\newtheorem{theorem}{Theorem}
\begin{document}

\bibliographystyle{natbib}

\title{Lagrangian Descriptors for Two Dimensional, Area Preserving, Autonomous and Nonautonomous Maps}
\author{Carlos Lopesino$^{1}$, Francisco Balibrea$^{1}$, Stephen Wiggins$^2$, Ana M. Mancho$^1$\\
$^1$Instituto de Ciencias Matem\'aticas, CSIC-UAM-UC3M-UCM,
\\ C/ Nicol\'as Cabrera 15, Campus Cantoblanco UAM, 28049
Madrid, Spain\\
$^2$School of Mathematics, University of Bristol, \\Bristol BS8 1TW, United Kingdom}
\maketitle
\begin{abstract}
In this paper we generalize the method of Lagrangian descriptors to  two dimensional, area preserving, autonomous and 
nonautonomous discrete time dynamical systems. We consider four generic model problems--a hyperbolic saddle point for a 
linear, area-preserving autonomous map, a hyperbolic saddle point for a nonlinear, area-preserving autonomous map, a 
hyperbolic saddle point for linear, area-preserving nonautonomous map, and a hyperbolic saddle point for nonlinear, 
area-preserving nonautonomous map. The discrete time setting allows us to evaluate the expression for the Lagrangian 
descriptors explicitly for a certain class of norms.  This enables us to  provide a rigorous setting for the notion that 
the `singular sets'' of the Lagrangian descriptors  correspond to the stable and unstable manifolds of hyperbolic 
invariant sets, as well as to understand how  this depends upon the particular norms that are used. Finally we analyze, 
from the computational point of view, the performance of this tool for general nonlinear maps, by computing the 
``chaotic saddle''  for autonomous and nonautonomous versions of  the H\'enon map.
\end{abstract}

\section{Introduction}\label{sec:intro}

Lagrangian descriptors (also referred to in the literature as the ''M function'') were first introduced as a tool for 
finding hyperbolic trajectories in \cite{chaos}.  In this paper the notion of  {\em distinguished trajectory} was  
introduced as a generalization of the well-known idea of distinguished {\em hyperbolic} trajectory. The numerical 
computation of distinguished trajectories was discussed in some detail, and applications to known benchmark examples, as 
well as to geophysical fluid flows defined as data sets  were also given. 
Later \cite{prl} showed that it could be used to reveal Lagrangian invariant structures in realistic fluid flows. In 
particular, a geophysical data set in the region of the Kuroshio current was analysed and it was shown that Lagrangian 
descriptors could be used to reveal the “Lagrangian skeleton” of the flow, i.e. hyperbolic and elliptic regions, as well 
as the invariant manifolds that delineate these regions.  A deeper study of the Lagrangian transport issue associated 
with the Kuroshio using Lagrangian descriptors is given in \cite{jfm}. Advantages of the method over finite time 
Lyapunov exponents (FTLE) and finite size Lyapunov exponents (FSLE) were also discussed.

Since then Lagrangian descriptors have been further developed and their ability to reveal phase space structures in 
dynamical systems more generally has been confirmed. In particular, Lagrangian descriptors are used in \cite{amism11} to 
 reveal the Lagrangian structures that define transport routes across the Antarctic polar vortex. Further studies of  
transport issues related to the Antarctic polar vortex using Lagrangian descriptors are given in \cite{ammsi13} where 
vortex Rossby wave breaking is related to Lagrangian structures. In \cite{rempel} Lagrangian descriptors are used to 
study  the influence of coherent structures on the saturation of a nonlinear dynamo. In \cite{mmw14} Lagrangian 
descriptors are used to analyse the influence of Lagrangian structure on the transport of buoys in the Gulf stream and 
in a region of the Gulf of Mexico relevant to the Deepwater Horizon oil spill. In \cite{mwcm13} a detailed analysis of 
the behaviour of Lagrangian descriptors is provided in terms of benchmark problems, new Lagrangian descriptors are 
introduced,  extension of Lagrangian descriptors to 3D flows is given (using the time dependent Hill’s spherical vortex 
as a benchmark problem), and a detailed analysis and discussion of the computational performance (with a comparison with 
FTLE) is presented.

Lagrangian descriptors are based on the integration, for a finite time, along trajectories of an intrinsic bounded, 
positive geometrical and/or physical property of the trajectory itself, such as the norm of the velocity, acceleration, 
or curvature. Hyperbolic structures are revealed as singular features of the contours of the Lagrangian descriptors, but 
the sharpness of these singular features depends on the particular norm chosen. These issues were explored in 
\cite{mwcm13}, and further explored in this paper.

All of the work thus far on Lagrangian descriptors has been in the continuous time setting. In this paper we generalize 
the method of Lagrangian descriptors to the discrete time setting of two dimensional area preserving maps, both 
autonomous and nonautonomous, and provide  theoretical support for their perfomance.

This paper is organized as follows. In section \ref{sec:DLDdef} we defined discrete Lagrangian descriptors. We then 
consider four examples. In  section \ref{sec:examp1} we consider a linear autonomous area preserving map have a 
hyperbolic saddle point at the origin, in  \ref{sec:examp2} we consider a nonlinear autonomous area preserving map have 
a hyperbolic saddle point at the origin, in  \ref{sec:examp3} we consider a linear nonautonomous area preserving map 
have a hyperbolic saddle  trajectory  at the origin, and in \ref{sec:examp4} we consider a nonlinear nonautonomous area 
preserving map have a hyperbolic trajectory at the origin. For each example we show that the Lagrangian descriptors 
reveal the stable and unstable manifolds by being singular on the manifolds. The notion of  ``being singular'' is made 
precise in Theorem \ref{thm:lin_aut_map}. In section \ref{sec:henon}  we explore further the method beyond the 
analytical examples. We use  discrete Lagrangian  descriptors to computationally reveal the chaotic saddle of the 
H\'enon map, and in section \ref{sec:NAhenon} we consider a nonautonomous version of the H\'enon map.  In section 
\ref{sec:summ} we summarize the conclusions and suggest future directions for this work.

\section{Lagrangian Descriptors for Maps}
\label{sec:DLDdef}

Let

\begin{equation}
\{ x_n, y_n \}_{n=-N}^{n=N}, \quad N \in \mathbb{N},
\label{orbit}
\end{equation}

\noindent
denote an orbit of length $2N +1$ generated by a two dimensional map. At this point it does not matter whether or not 
the map is autonomous or nonautonomous. The method of Lagrangian descriptors applies to orbits in general, regardless of 
the type of dynamics that generate the orbit.

The first Lagrangian descriptor  (also known as the ``$M$ function'') for continuous time systems was based on computing
the arclength of trajectories for a finite time (\cite{chaos}). Extending this idea to maps is straightforward, and the
corresponding discrete Lagrangian descriptor (DLD) is given by:

\begin{equation}
MD_2 = \sum^{N-1}_{i=-N} \sqrt{ (x_{i+1}-x_i)^2 + (y_{i+1}-y_i)^2 }.
\label{eq:DLD_al}
\end{equation}

\noindent
In analogy with the  work on continuous time Lagrangian descriptors in \cite{mwcm13}, we consider different norms for 
the discretized arclength as follows:

\begin{equation}
MD_p = \sum^{N-1}_{i=-N} \sqrt[p]{ |x_{i+1}-x_i|^p + |y_{i+1}-y_i|^p }, \quad p>1,
\label{eq:DLD_p>1}
\end{equation}

\noindent
and

\begin{equation}
MD_p = \sum^{N-1}_{i=-N} |x_{i+1}-x_i|^p + |y_{i+1}-y_i|^p , \quad p \leq 1.
\label{eq:DLD_p<1}
\end{equation}

\noindent
Considering the space of orbits as a sequence space, \eqref{eq:DLD_p>1} and \eqref{eq:DLD_p<1} are the $\ell^p$ norms
of an orbit.

Henceforth,  we will consider only the case $p \leq 1$ since the proofs are more simple in this case. Now we will 
explore these definitions in the context of  some easily  understood, but generic, examples.

\subsection{Example 1: A Hyperbolic Saddle Point for Linear, Area-Preserving Autonomous Maps}
\label{sec:examp1}

\subsubsection{Linear Saddle point}

Consider the following linear, area-preserving autonomous map:

\begin{equation}
\left \{ \begin{array}{ccc}
   x_{n+1} & = & \lambda x_n,\\
   y_{n+1} & = & \frac{1}{\lambda} y_n, \\
\end{array}\right .
\label{eq:lin_aut_map}
\end{equation}

\noindent
where we will take  $\lambda > 1$. Note that this map is area-preserving, but  area-preservation was not used in the
definition of the DLD's above.

Now we will compute \eqref{eq:DLD_p<1} for this example. Towards this end, we introduce the notation

$$MD_p = MD^{+}_p + MD^{-}_p$$

\noindent
where

$$MD^{+}_p = \sum ^{N-1}_{i=0}  |x_{i+1}-x_i|^p + |y_{i+1}-y_i|^p,$$

\noindent
and

$$MD^{-}_p = \sum ^{-N}_{i=-1}  |x_{i+1}-x_i|^p + |y_{i+1}-y_i|^p.$$

\noindent
We begin by computing $MD^{+}_p$. The computation of  $MD^{-}_p$ is completely analogous, and  therefore we will not
provide the details. We have:

\begin{eqnarray}
  MD^{+}_p & = & \displaystyle{\sum ^{N-1}_{i=0}  |x_{i+1}-x_i|^p + |y_{i+1}-y_i|^p} \nonumber\\
	        &   & 						     \nonumber\\
            & = & \displaystyle{|x_1-x_0|^p + |y_1-y_0|^p + ... + |x_{N}-x_{N-1}|^p + |y_{N}-y_{N-1}|^p}\nonumber\\
            &   &						   \nonumber  \\
            & = & \displaystyle{|\lambda x_0-x_0|^p + |1/\lambda y_0-y_0|^p + ... + |\lambda^{N} x_0 - \lambda
^{N-1}x_0|^p + |1/\lambda^{N} y_0 - 1/\lambda^{N-1} y_0|^p}\nonumber\\
            &   &						     \nonumber\\
            & = & \displaystyle{|x_0|^p|\lambda-1|^p \left (1+\lambda^p+...+\lambda^{(N-1)p}\right ) +
|y_0|^p|1/\lambda-1|^p \left (1+1/\lambda^p+...+1/\lambda^{(N-1)p}\right )}\nonumber\\
	        &   &						    \nonumber \\
	        & = & \displaystyle{|x_0|^p|\lambda-1|^p \left (\frac{\lambda^{Np}-1}{\lambda^p-1}\right ) +
|y_0|^p|1/\lambda-1|^p \left (\frac{1/\lambda^{Np}-1}{1/\lambda^p-1}\right )}\nonumber
\end{eqnarray}
\normalsize

\noindent
where in the last step we have used that the sums are geometric with rates $\lambda^p$ and $1/\lambda^p$, respectively.
By completely analogous calculations we obtain $MD^{-}_p$ as:

$$MD^{-}_p = |x_0|^p|1/\lambda-1|^p \left (\frac{1/\lambda^{Np}-1}{1/\lambda^p-1}\right ) +
|y_0|^p|\lambda-1|^p \left (\frac{\lambda^{Np}-1}{\lambda^p-1}\right ).$$

\noindent
Putting the two terms together, we obtain:

\begin{equation}
\begin{array}{ccl}
       MD_p & = & MD^{+}_p + MD^{-}_p 				     \\
	        &   & 						     \\
            & = & (|x_0|^p+|y_0|^p)(|\lambda-1|^p \left (\frac{\lambda^{Np}-1}{\lambda^p-1}\right ) +
|1/\lambda-1|^p\left (\frac{1/\lambda^{Np}-1}{1/\lambda^p-1}\right ))\\
	        &   & 						     \\
            & = & (|x_0|^p+|y_0|^p)f(\lambda,p,N),\\
\end{array}
\label{DLD_lin_aut}
\end{equation}

\noindent
where $\lambda$, $p$ and $N$ are fixed.

Extensive numerical simulations in a variety of examples (cf. \cite{chaos, prl, nlpg2, amism11, jfm, mwcm13, mmw14})
have shown that ``singular features'' of Lagrangian descriptors correspond to stable and unstable manifolds of
hyperbolic trajectories. We can make this statement rigorous and precise in the context of this example.

\begin{theorem}
Consider a vertical line perpendicular to the unstable manifold of the origin. In particular, consider an arbitrary
point $x = \bar{x}$ and a line parallel to the $y$ axis passing through this point. Then the derivative of $MD_p$, $p
<1$, along this line becomes unbounded on the unstable manifold of the origin.

Similarly, consider a horizontal line perpendicular to the stable manifold of the origin. In particular, consider an
arbitrary point $y = \bar{y}$ and a line parallel to the $x$ axis passing through this point. Then the derivative of
$MD_p$, $p <1$, along this line becomes unbounded on the stable manifold of the origin.
\label{thm:lin_aut_map}
\end{theorem}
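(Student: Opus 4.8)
The plan is to read everything off from the closed-form expression \eqref{DLD_lin_aut} that has just been derived. Regarding $MD_p$ as a function on the plane of initial conditions $(x_0,y_0)$, we have $MD_p(x_0,y_0)=(|x_0|^p+|y_0|^p)\,f(\lambda,p,N)$, where $f(\lambda,p,N)$ is a fixed constant; the first step is to note that it is strictly positive. Indeed, for $\lambda>1$ and $0<p<1$ the first summand $|\lambda-1|^p\,\frac{\lambda^{Np}-1}{\lambda^p-1}$ is positive, and in the second summand $|1/\lambda-1|^p\,\frac{1/\lambda^{Np}-1}{1/\lambda^p-1}$ the numerator and denominator of the fraction are both negative, so it too is positive. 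Hence the geometry of $MD_p$ is entirely governed, up to an overall positive scale, by the elementary function $(x,y)\mapsto|x|^p+|y|^p$.

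Next I would identify the invariant manifolds of \eqref{eq:lin_aut_map}. For $\lambda>1$, forward iteration expands the $x$-coordinate and contracts the $y$-coordinate, so the unstable manifold of the origin is the $x$-axis $\{y=0\}$ and the stable manifold is the $y$-axis $\{x=0\}$. A line parallel to the $y$-axis through the point $x=\bar x$ is therefore perpendicular to the unstable manifold and meets it at the single point $(\bar x,0)$; symmetrically, a line parallel to the $x$-axis through $y=\bar y$ meets the stable manifold at $(0,\bar y)$.

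Then comes the one-line computation. Restricting $MD_p$ to the vertical line $x=\bar x$ and differentiating along it gives $\frac{\partial}{\partial y}MD_p(\bar x,y)=f(\lambda,p,N)\,p\,\mathrm{sgn}(y)\,|y|^{p-1}$ for $y\neq 0$. Since $p<1$, the exponent $p-1$ is negative, so $|y|^{p-1}\to\infty$ as $y\to 0$, i.e. as the point on the line approaches the unstable manifold; since the prefactor is a nonzero constant, the derivative of $MD_p$ along this line is unbounded there. The argument for the horizontal line $y=\bar y$ is verbatim the same with the roles of $x$ and $y$ interchanged: $\frac{\partial}{\partial x}MD_p(x,\bar y)=f(\lambda,p,N)\,p\,\mathrm{sgn}(x)\,|x|^{p-1}\to\infty$ as $x\to 0$, which is precisely the stable manifold.

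There is essentially no hard step once \eqref{DLD_lin_aut} is available; the only points needing (minimal) care are verifying that $f(\lambda,p,N)\neq 0$, so that the singular behaviour of $|y|^{p-1}$ is not accidentally cancelled, and observing that the conclusion holds even along the lines $\bar x=0$ or $\bar y=0$ that lie in an invariant manifold themselves, since there $MD_p$ reduces to $|y|^p f$ (resp. $|x|^p f$) and the same blow-up occurs as the origin is approached. In short, the substantive content of the theorem is the explicit evaluation carried out before its statement, and the proof is merely its corollary.
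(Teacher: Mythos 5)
Your proposal is correct and follows exactly the route the paper intends: the paper's proof is the one-line remark that the result is ``a simple calculation using \eqref{DLD_lin_aut} and the fact that $p<1$,'' and you have simply carried out that calculation explicitly, including the worthwhile check that $f(\lambda,p,N)>0$ so the $|y|^{p-1}$ (resp. $|x|^{p-1}$) blow-up is not cancelled.
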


\begin{proof} This is a simple calculation using \eqref{DLD_lin_aut} and the fact that $p<1$. This is illustrated in
Figure \ref{Saddle_vs_MDp}.
\end{proof}

\begin{figure}[H]
\centering
{\includegraphics[scale = 0.4]{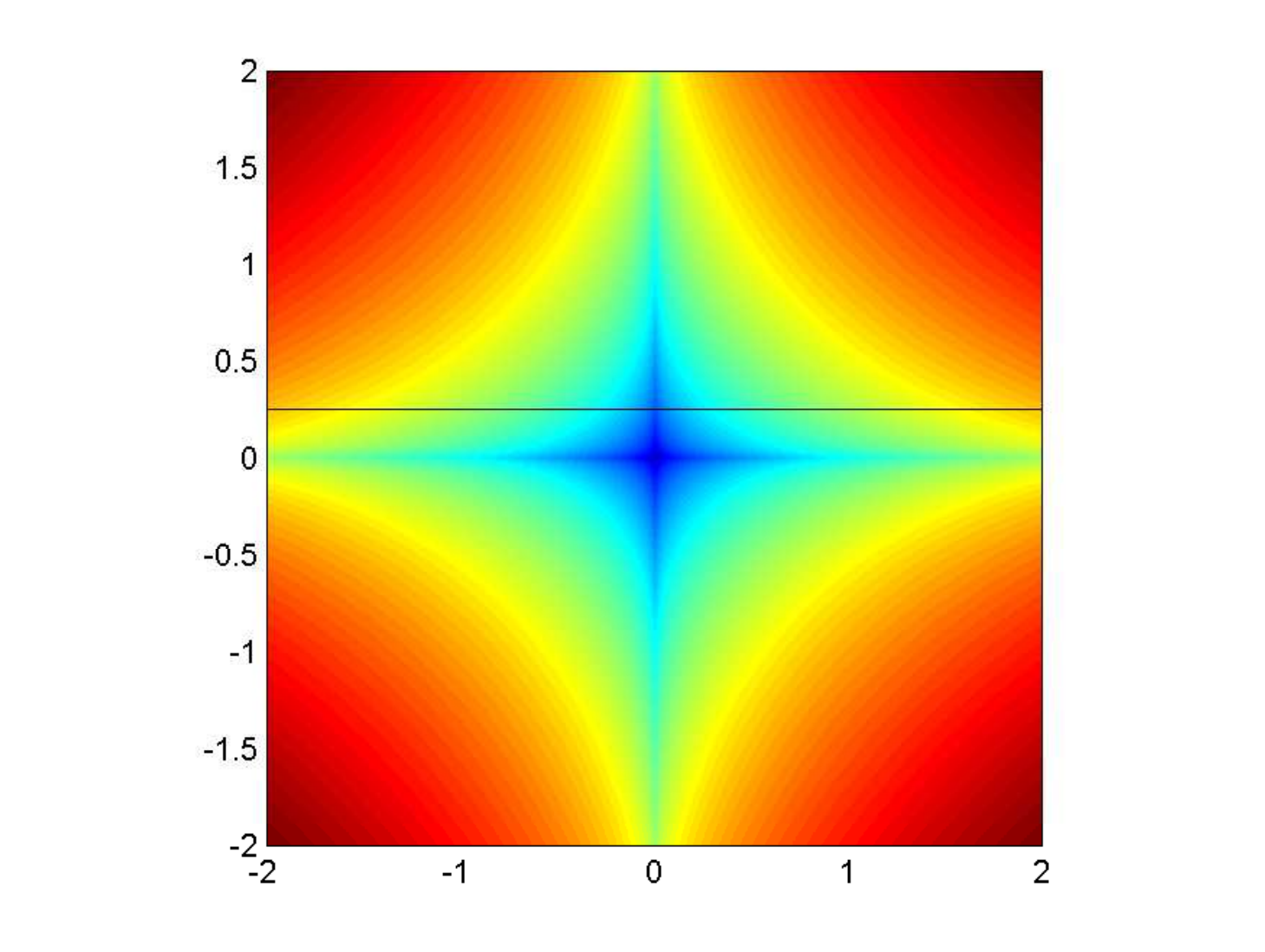}}
{\includegraphics[scale = 0.4]{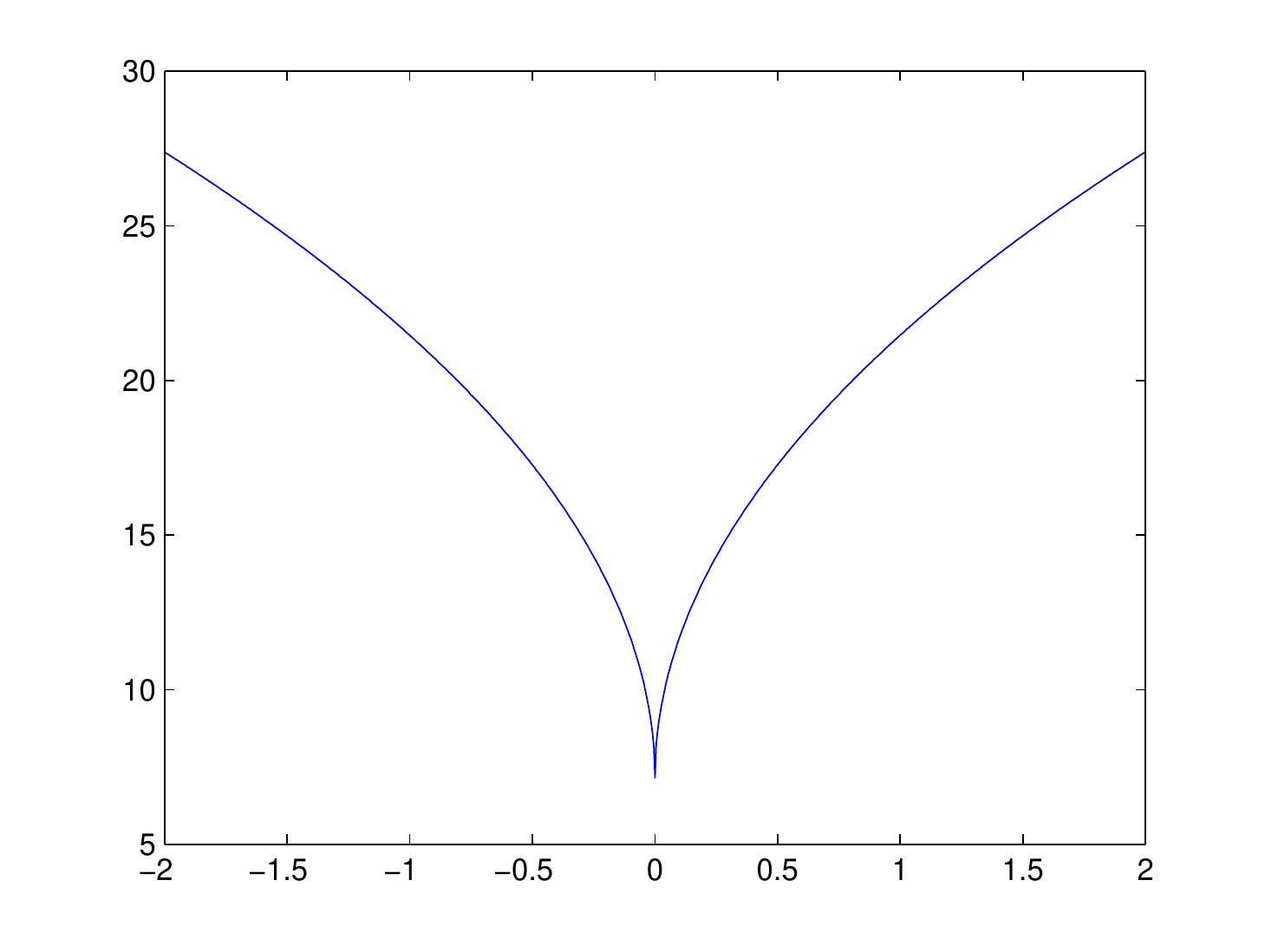}}
\caption{The left-hand panel shows contours of $MD_p$ for $p=0.5$, $N=20$ and $\lambda=1.1$, with a grid point spacing 
of $0.005$. The horizontal black line is at $y=0.25$. The right-hand panel shows the graph of $MD_p$ along this 
horizontal black line, which illustrates the singular nature of the derivative of $MD_p$ on the stable manifold across 
the line $x=0$.}
\label{Saddle_vs_MDp}
\end{figure}

\subsubsection{Linear Rotated Saddle point}

In the example studied  in the previous section the DLD is singular along the stable and unstable manifolds for any 
iteration $n$. However,  the results discussed in \cite{ prl,mwcm13} for the continuous time case show that the 
manifolds are observed for $\tau$  ``sufficiently large'', which is related to a large number of iterations in the 
discrete time case. We explore further these connections by  studying  the case of the rotated saddle point. In order to 
establish a direct link to the continuous time case, we consider  the limits of  small and large numbers  of iterations, 
and  $\lambda \approx 1$.

\noindent
We have the following discrete dynamical system:

  \begin{equation}\label{mapF}
  F(x,y) = A
           \left(
             \begin{array}{c}
             x \\
               \\
             y \\
             \end{array}
           \right)
  \end{equation}

  \noindent
  where

  \begin{equation}\label{matrizA}
    A = \left(
          \begin{array}{cc}
            \frac{1}{\lambda} + \lambda  & \frac{1}{\lambda} - \lambda \\
                                         &                             \\
            \frac{1}{\lambda} - \lambda  & \frac{1}{\lambda} + \lambda \\
          \end{array}
        \right)= \frac{1}{2\lambda} \left(
                  \begin{array}{cc}
                    1+\lambda^2 & 1-\lambda^2 \\
                                &             \\
                    1-\lambda^2 & 1+\lambda^2 \\
                  \end{array}
                \right)
  \end{equation}

  \vspace{3mm}
  \noindent
  in our case with $\lambda > 1$. It is easy to see that the stable and the unstable manifolds are given by the vectors 
$(1,1)$ and $(1,-1)$ respectively. We want to compute $A^{i}-A^{i-i}$ in order to get the expressions of the DLD:
  \begin{equation}\label{DLD}
  MD_p = \sum ^{N-1}_{i=-N}  |x_{i+1}-x_i|^p + |y_{i+1}-y_i|^p
  \end{equation}
  and to find where the 'singularities' are produced and why.

  We know that A can be diagonalized so there exist $D$ and $T$ such that
  \begin{equation}\label{diagonalization}
    D = T^{-1} \cdot A \cdot T
  \end{equation}
  where $D$ is a diagonal matrix. Therefore we got the next expression
  $$D^i = T^{-1} \cdot A^i \cdot T, \quad \text{for every } i.$$
  which is equivalent to
  \begin{equation}\label{A_i}
    A^i = T \cdot D^i \cdot T^{-1}, \quad \text{for every } i.
  \end{equation}
  It is clear that the matrix $T$ is
  \begin{equation}\label{matrizT}
    T = \left(
          \begin{array}{cc}
            1  & 1 \\
            -1 & 1 \\
          \end{array}
        \right)
  \end{equation}
  and therefore
  $$T^{-1} = \frac{1}{2}\left (
                        \begin{array}{cc}
                          1 & -1 \\
                          1 &  1 \\
                        \end{array}
                        \right )$$
  We can check equation \eqref{diagonalization}
  \begin{equation}\label{matrizD}
  \small{
    D = \frac{1}{4\lambda}\left(
                   \begin{array}{ccc}
                     1 & & -1 \\
                       & &    \\
                     1 & &  1 \\
          \end{array}
        \right) \left(
                \begin{array}{cc}
                  1+\lambda^2 & 1-\lambda^2 \\
                              &             \\
                  1-\lambda^2 & 1+\lambda^2 \\
                \end{array}
                \right) \left(
                        \begin{array}{ccc}
                          1  & & 1 \\
                             & &   \\
                          -1 & & 1 \\
                        \end{array}
                        \right) = \left(
                                  \begin{array}{cc}
                                    \lambda & 0                 \\
                                            &                   \\
                                    0       & \frac{1}{\lambda} \\
                                  \end{array}
                                  \right)
  }
  \end{equation}
  So we can guess now how is $A^i$ using equation \eqref{A_i}
  \begin{equation}
  \footnotesize{
    A^i = \frac{1}{2}\left(
                     \begin{array}{ccc}
                       1  & & 1 \\
                          & &   \\
                       -1 & & 1 \\
                     \end{array}
                     \right) \left(
                     \begin{array}{cc}
                       \lambda^i & 0                   \\
                                 &                     \\
                       0         & \frac{1}{\lambda^i} \\
                     \end{array}
                     \right) \left(
                             \begin{array}{ccc}
                               1 & & -1 \\
                                 & &    \\
                               1 & &  1 \\
                             \end{array}
                             \right) = \frac{1}{\lambda^i}\left(
                                                          \begin{array}{cc}
                                                            1+\lambda^{2i} & 1-\lambda^{2i} \\
                                                                           &                \\
                                                            1-\lambda^{2i} & 1+\lambda^{2i} \\
                                                          \end{array}
                                                          \right)
  }
  \end{equation}
  Therefore
  \begin{equation}\label{A_i_expression}
    A^{i}-A^{i-1} = \frac{1}{\lambda^i}\left(
                                     \begin{array}{cc}
                                       \lambda^{2i}-\lambda^{2i-1}-\lambda+1  & -\lambda^{2i}+\lambda^{2i-1}-\lambda+1 \\
                                                    &                         \\
                                       -\lambda^{2i}+\lambda^{2i-1}-\lambda+1 &
                                       \lambda^{2i}-\lambda^{2i-1}-\lambda+1  \\
                                     \end{array}
                                     \right)
  \end{equation}

Now we are going to study the analytical expression of the stable and unstable manifold. For that purpose we will 
develop only $MD^{+}_p$ expression ($MD^-_p$ is analogous). So we have to keep in mind the expression for $MD^{+}_p$ 
that is

  \begin{equation}
  MD^{+}_p = \sum ^{N-1}_{i=0}  |x_{i+1}-x_i|^p + |y_{i+1}-y_i|^p
  \end{equation}
  therefore using equation \eqref{A_i_expression} for $N \geq 1$

  \begin{equation}\label{suma_A_n}
  \scriptsize{
  \begin{array}{rl}
    MD^{+}_p = & \displaystyle{\sum^{N-1}_{i=0}} \frac{1}{\lambda^{(i+1)p}}|(\lambda^{2(i+1)}-\lambda^{2(i+1)-1}-\lambda+1)x_0 + (-\lambda^{2(i+1)}+\lambda^{2(i+1)-1}-\lambda+1)y_0|^p \\
               &   \\
               & + \frac{1}{\lambda^{(i+1)p}}|(-\lambda^{2(i+1)}+\lambda^{2(i+1)-1}-\lambda+1)x_0 +
    \lambda^{2(i+1)}-\lambda^{2(i+1)-1}-\lambda+1)y_0|^p \\
  \end{array}}
  \end{equation}

  \vspace{3mm}
  \noindent
Each term on this sum has singularities  along two different lines. In particular, for each $i$ and $\lambda$, we have 
the two singular lines

  \begin{equation}\label{slope_m}
    y_0 = \frac{\lambda^{2(i+1)}-\lambda^{2(i+1)-1}-\lambda+1}{\lambda^{2(i+1)}-\lambda^{2(i+1)-1}+\lambda-1}x_0 = m(\lambda,i)x_0
  \end{equation}

  \noindent
  and

  \begin{equation}\label{slope_-m}
    y_0 = \frac{1}{m(\lambda,n)}x_0
  \end{equation}

  \noindent
  where $m(\lambda,i)$ and $\displaystyle{\frac{1}{m(\lambda,i)}}$ are, respectively, the slopes of the singular lines.
  If we fix $ \lambda = \lambda_0$ and we increase the number of iterations, we can see the evolution of the singular 
features to the limit shown in Figure \ref{sequence_of_DLD}

  \begin{equation}\label{slope_1}
    \lim_{i \to \infty}m(\lambda_0,i) = 1
  \end{equation}

  \noindent
This convergence is reached rapidly and, for example, for $\lambda=1.1$ it is noticeable from $i=20$ onwards. Thus at 
large $i$ most of the terms in the summation \eqref{suma_A_n} contribute with the same slope, i.e., \eqref{slope_1}, 
Therefore the  contributions of terms in the summation \eqref{suma_A_n} with small $i$  are small and make little impact 
in the global sum \eqref{suma_A_n}. If $i$ is small, the number of terms  contributing  to the DLD is small, and each 
term is a $C^0$ function with discontinuities along  {\em different} lines. Since all terms contribute the same to the 
total pattern, no particular feature is highlighted (see Figure \ref{sequence_of_DLD}b) and \ref{sequence_of_DLD}c)).

The limit $\lambda \approx 1$ is closely related to the Lagrangian Descriptors defined for the continuous time case. 
This  can be seen by considering the limit and noting that $\lambda$ quantifies the separation of points as they are 
iterated and relating this to the arclength integral for the linear saddle point discussed in \cite{mwcm13}.

For any  $i = n_0$  fixed,  it is possible to find a  $\lambda$ in the limit close  to 1 that makes the slope $m$ close 
to the limit value:

  \begin{equation}
    \lim_{\lambda \to 1}m(\lambda,n_0) = 0
  \end{equation}

  \noindent
  In this case, equations \eqref{slope_m} and \eqref{slope_-m} tend to $y=0$ and $x=0$, respectively. The approach to 
this limit  can be observed in the sequence of images shown Figure \ref{sequence_of_DLD} and the DLD derivative along 
the line $y=0.25$ shown in  Figure \ref{derivative_of_DLD}.

\begin{figure*}[htbp!]
  \centering
  \subfigure[DLD $\lambda=1.1$ and $i=1$]{\includegraphics[width=0.3\linewidth]{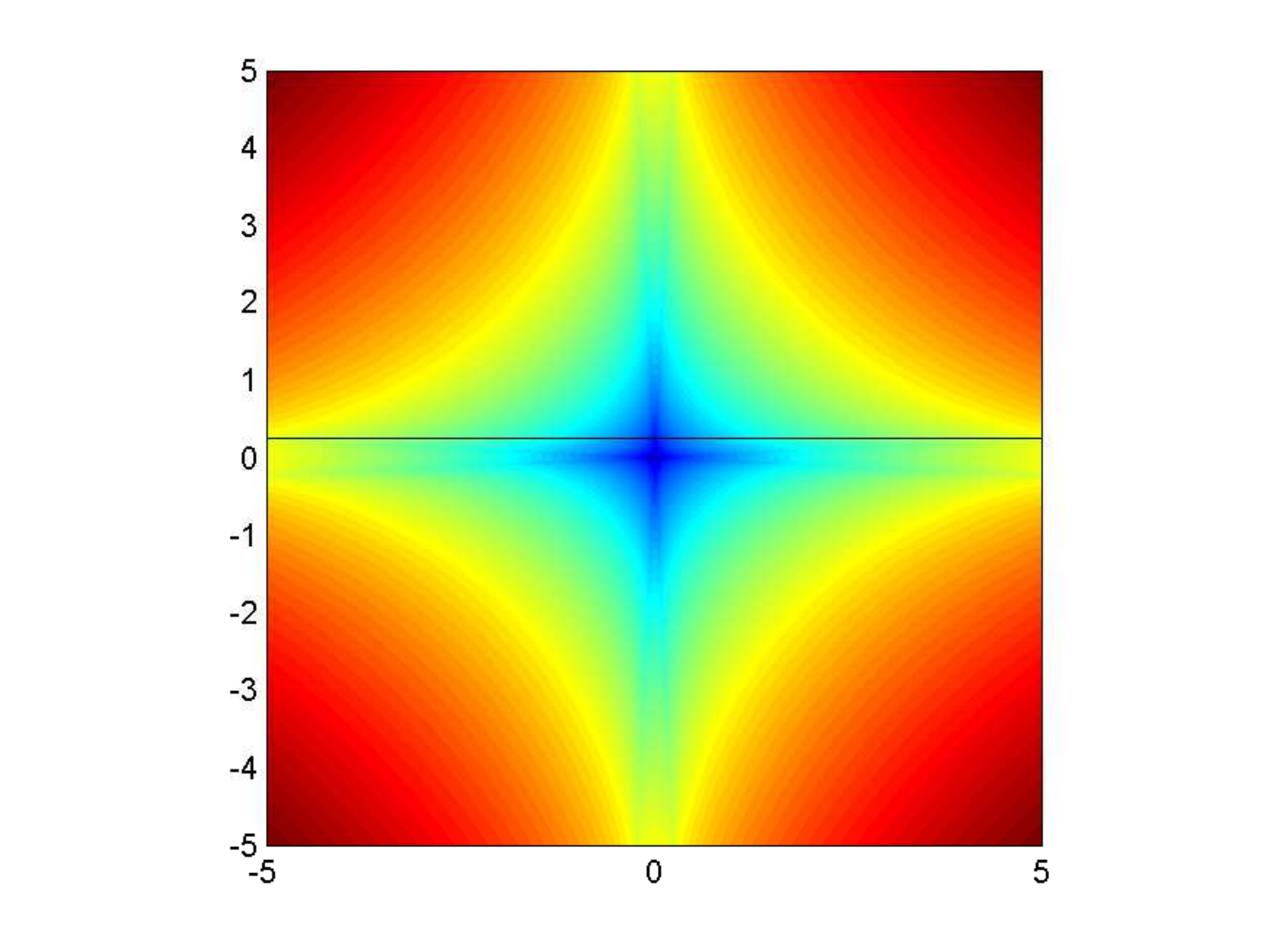}}
  \subfigure[DLD $\lambda=1.1$ and $i=5$]{\includegraphics[width=0.3\linewidth]{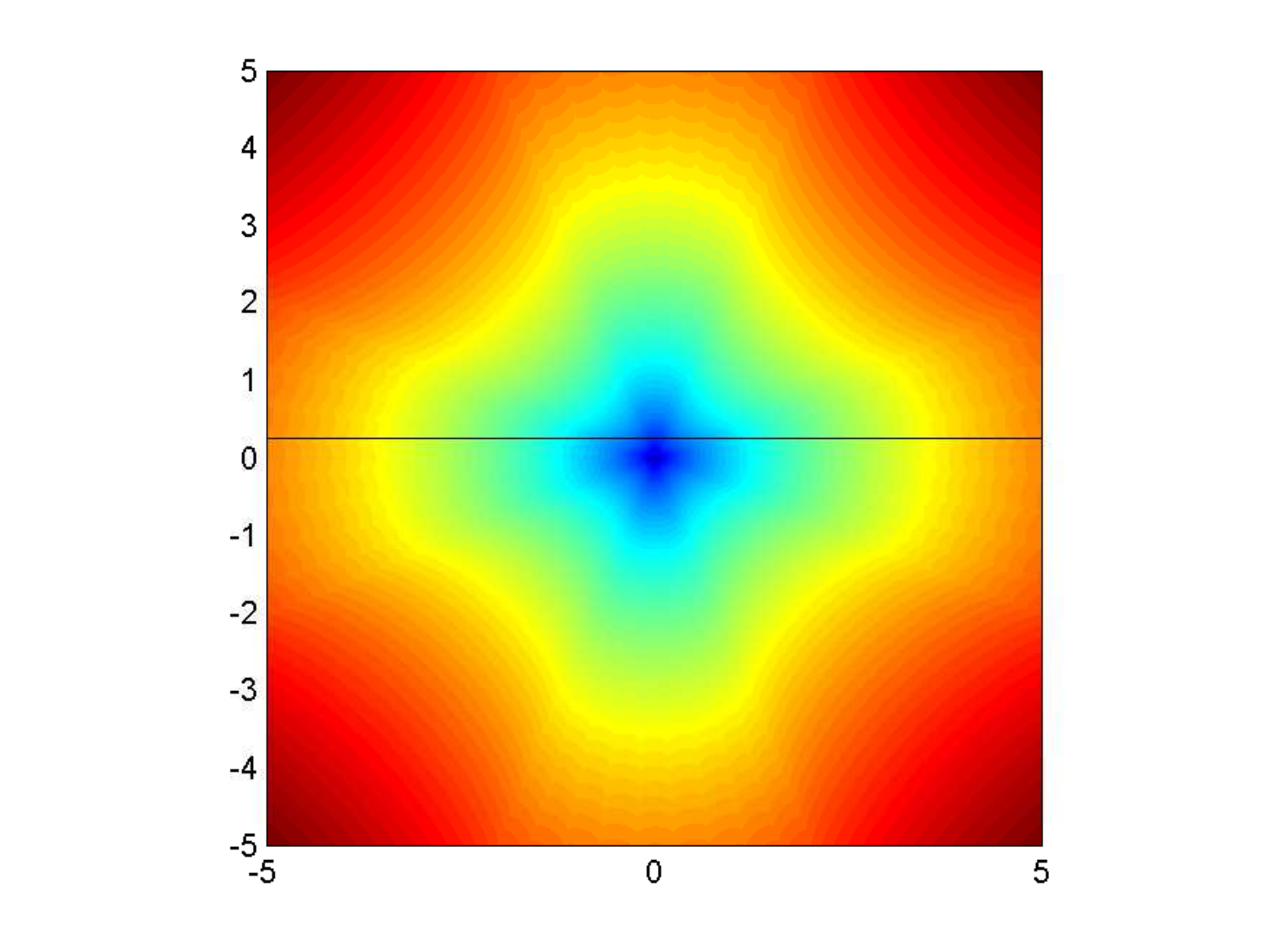}} \\
  \subfigure[DLD $\lambda=1.1$ and $i=10$]{\includegraphics[width=0.3\linewidth]{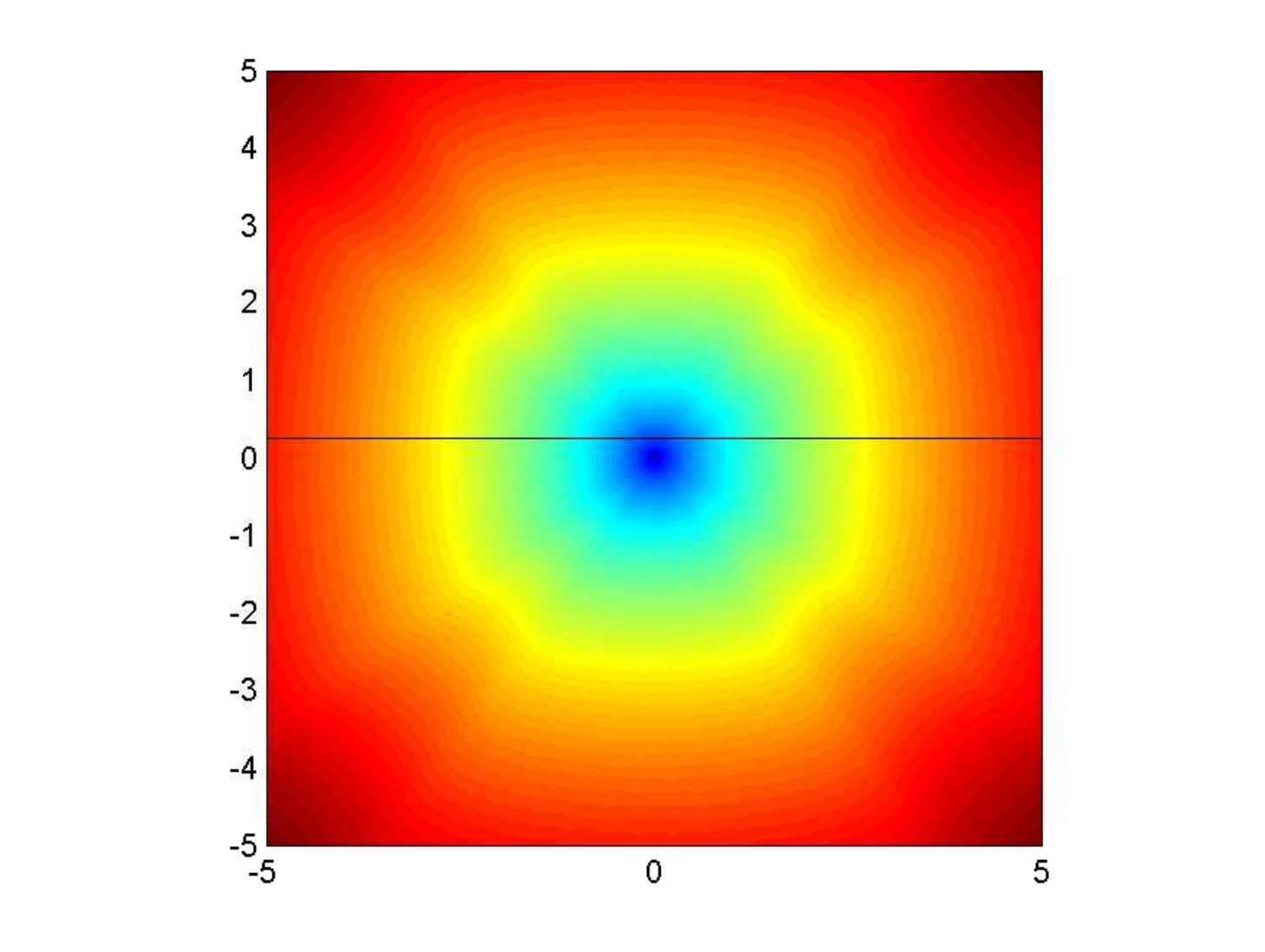}}
  \subfigure[DLD $\lambda=1.1$ and $i=20$]{\includegraphics[width=0.3\linewidth]{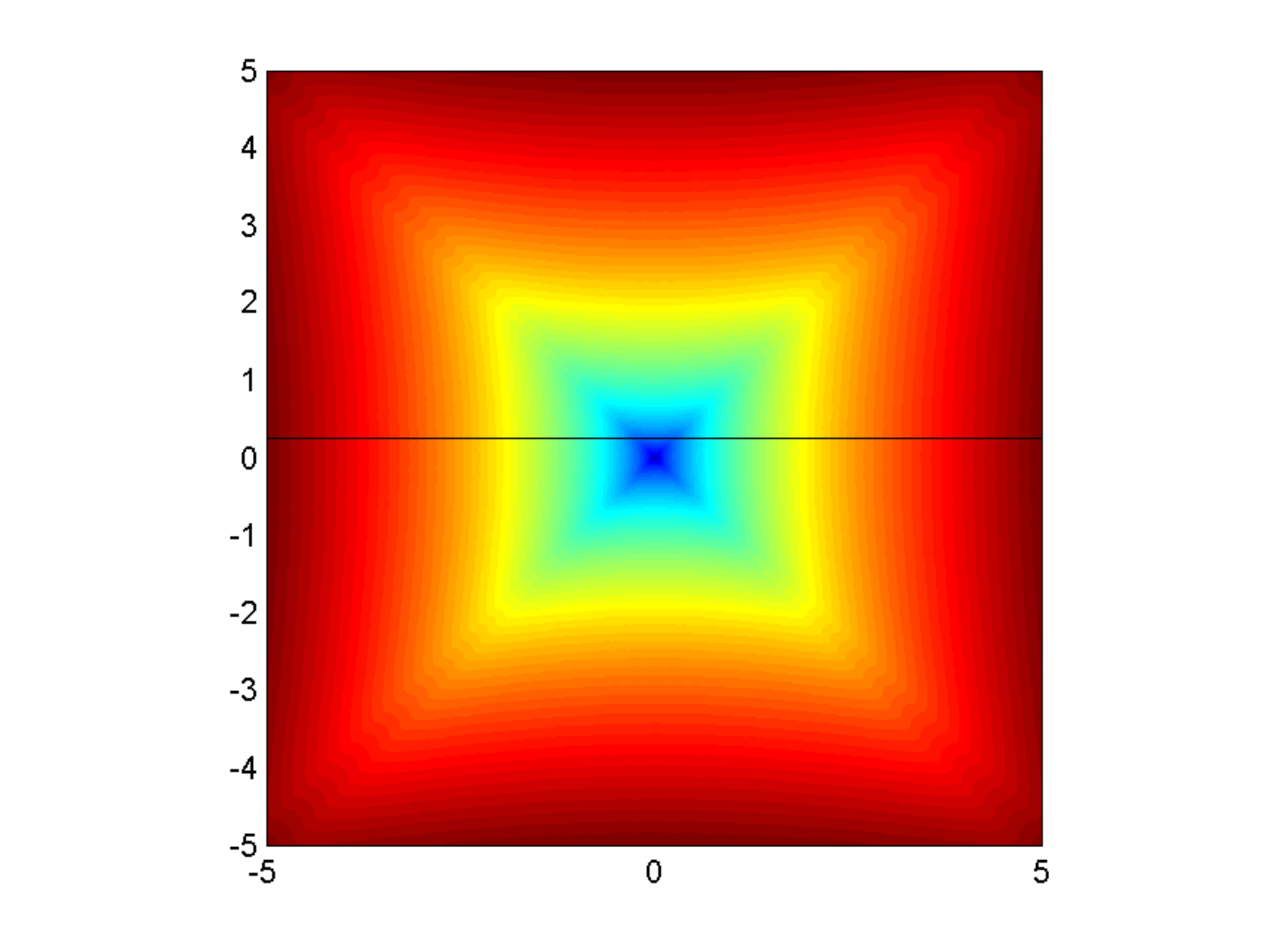}} \\
  \subfigure[DLD $\lambda=1.1$ and $i=30$]{\includegraphics[width=0.3\linewidth]{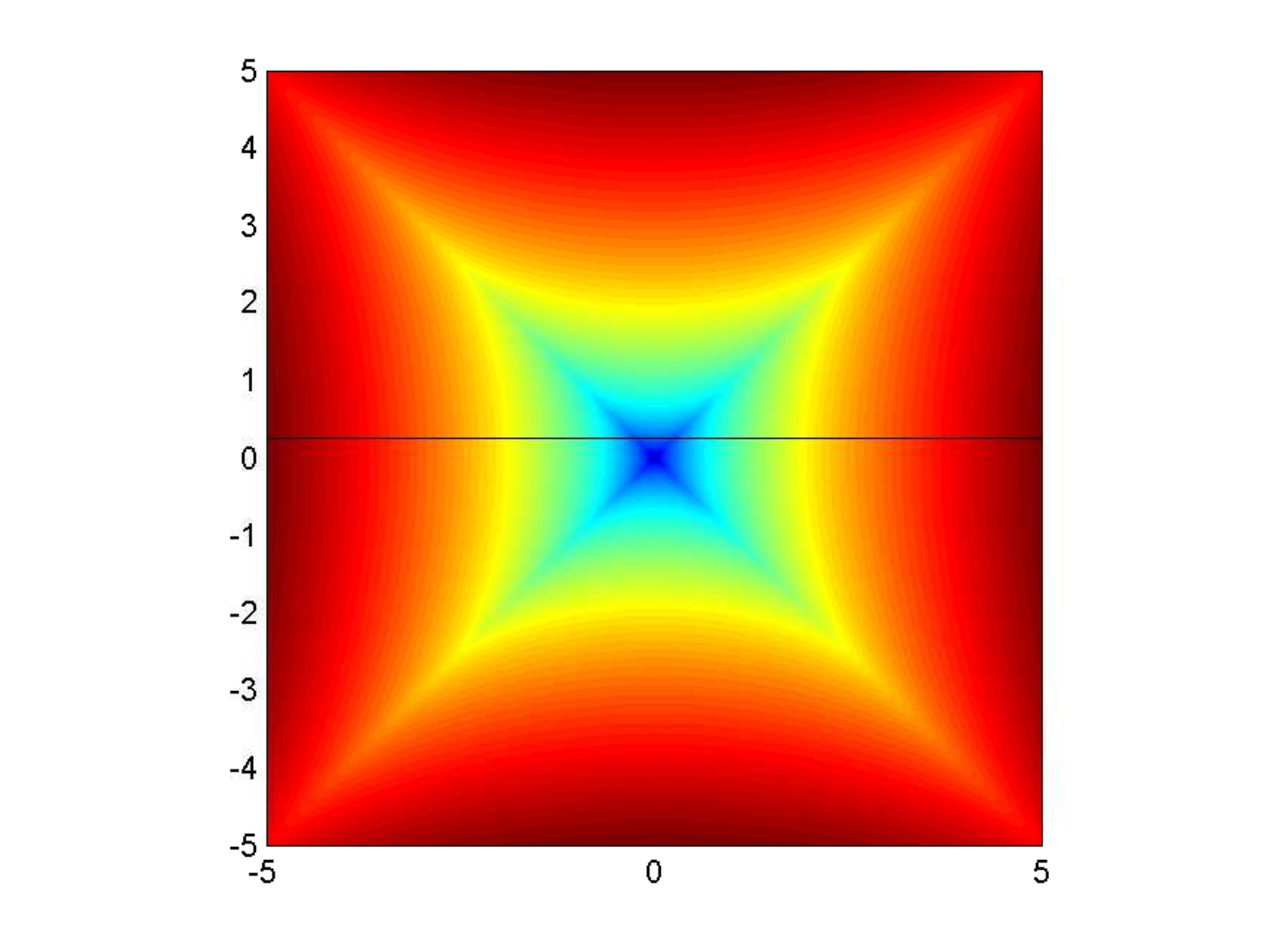}}
  \subfigure[DLD $\lambda=1.1$ and $i=100$]{\includegraphics[width=0.3\linewidth]{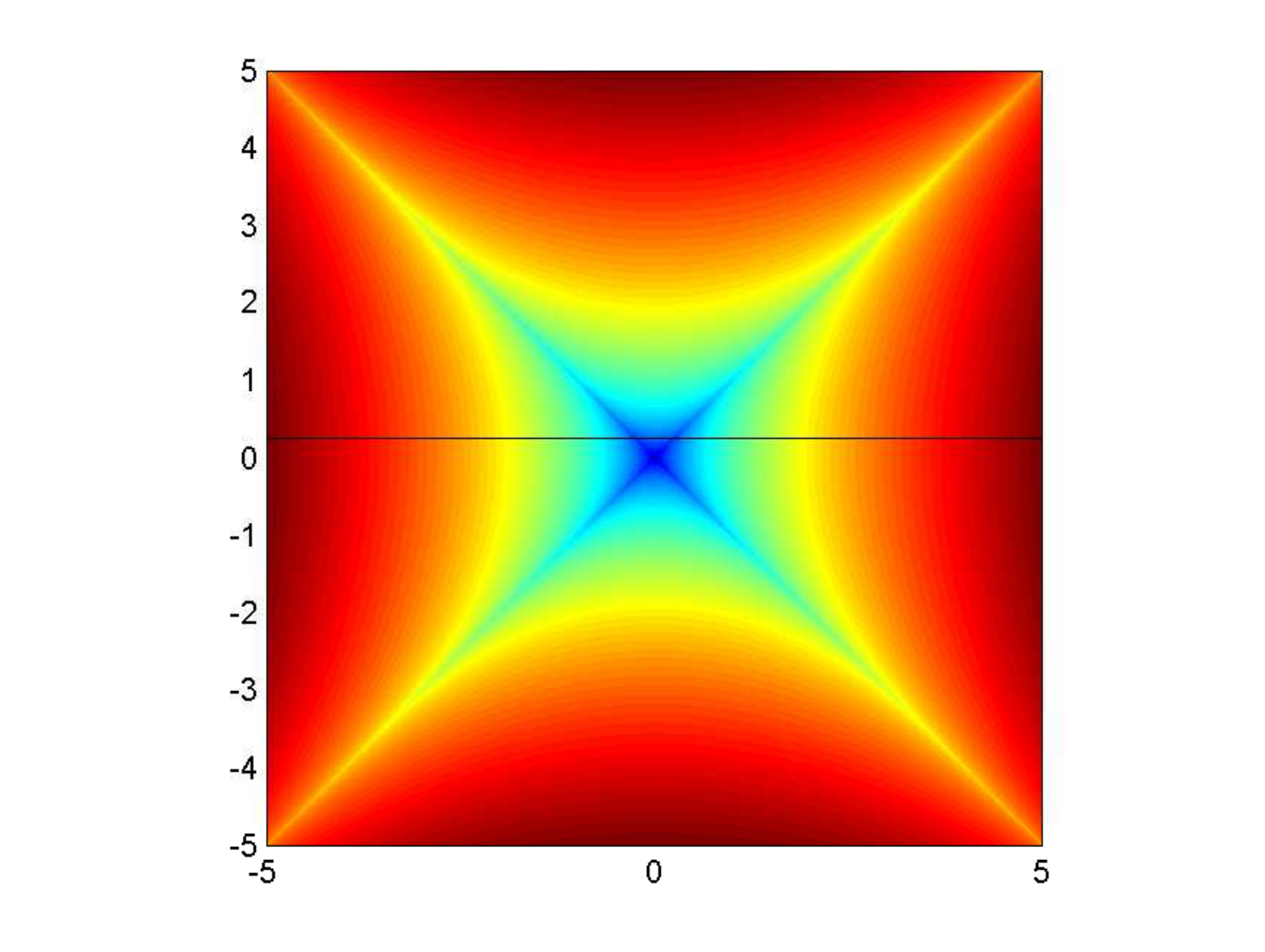}}\\
  \caption{DLD for different values of $\lambda$ and iterations $i$.}
  \label{sequence_of_DLD}
\end{figure*}

\begin{figure*}[htbp!]
  \centering
  \subfigure[Derivative of the DLD  for $\lambda=1.1$ and $i=1$]{\includegraphics[width=0.3\linewidth]{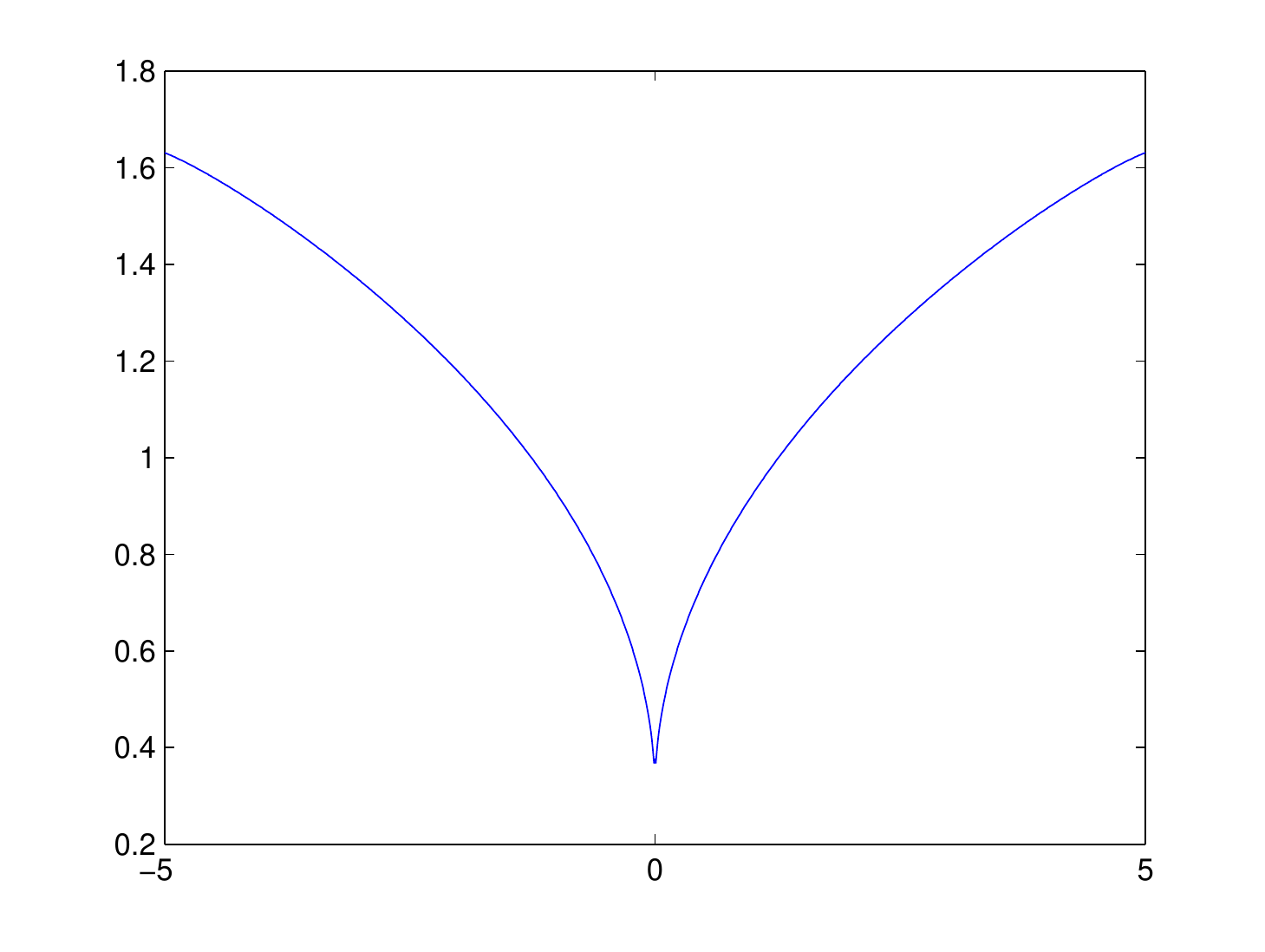}}
  \subfigure[Derivative of the DLD  for $\lambda=1.1$ and $i=5$]{\includegraphics[width=0.3\linewidth]{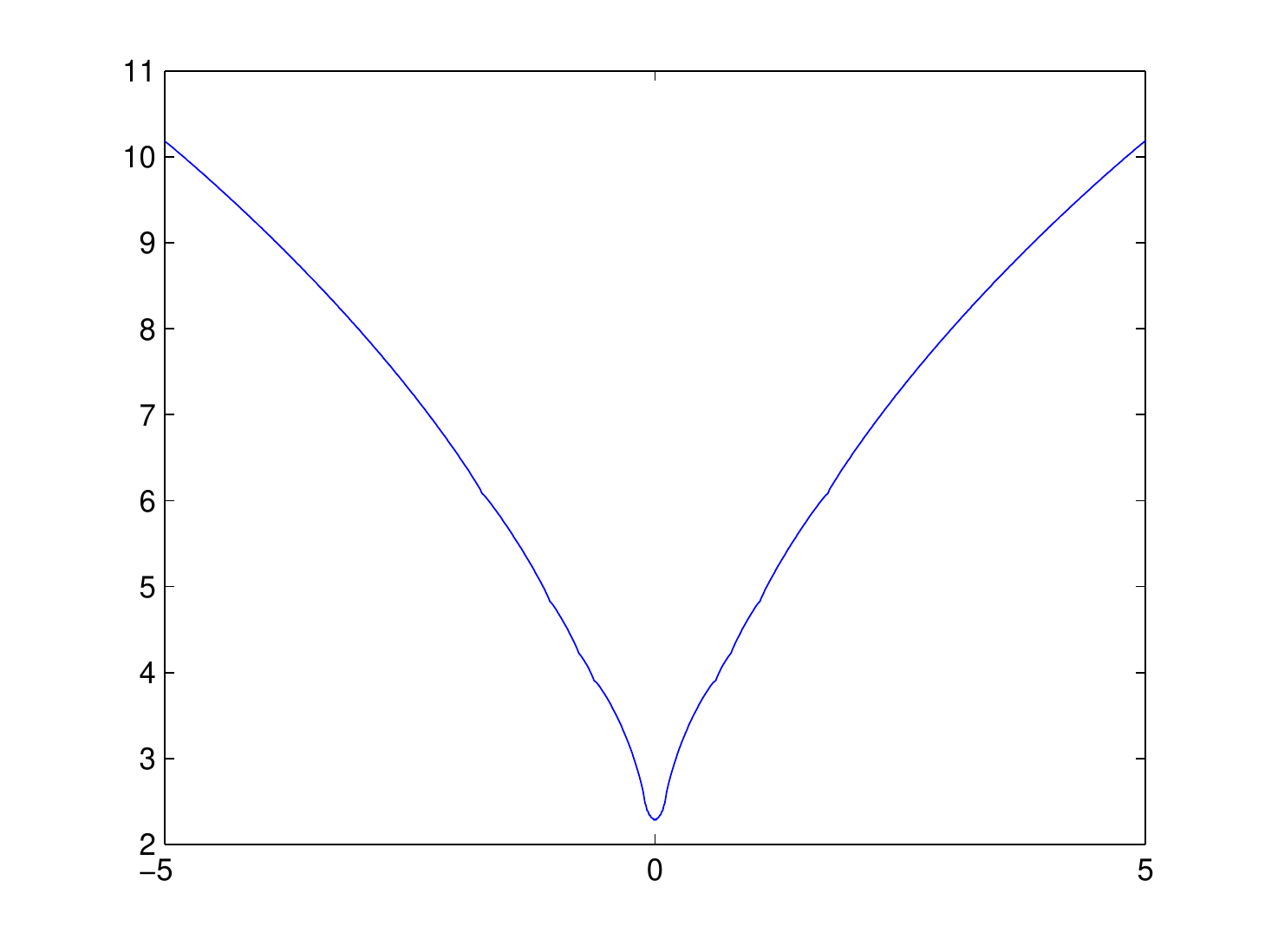}}\\
  \subfigure[Derivative of the DLD  for$\lambda=1.1$ and $i=10$]{\includegraphics[width=0.3\linewidth]{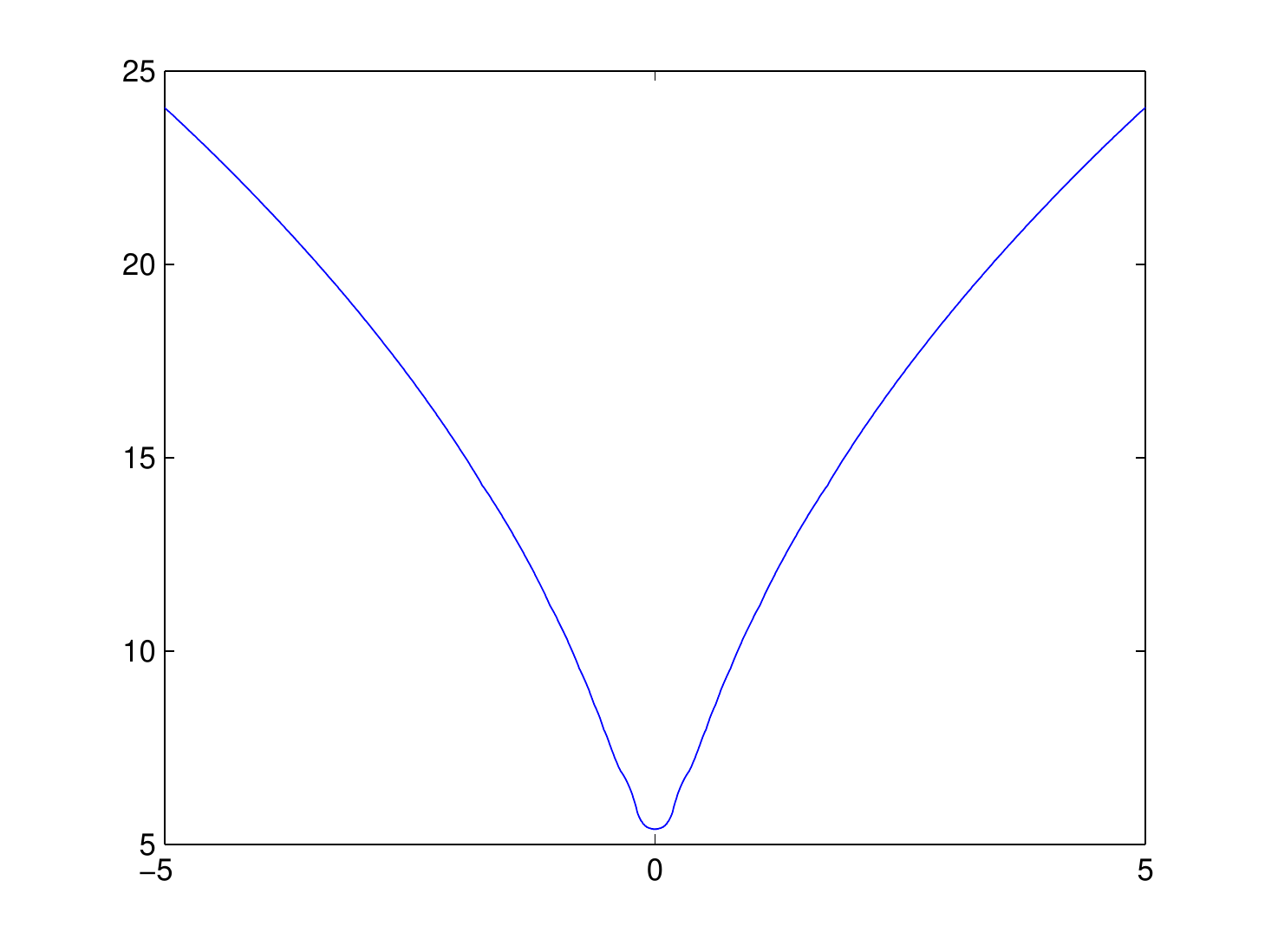}}
  \subfigure[Derivative of the DLD  for $\lambda=1.1$ and $i=20$]{\includegraphics[width=0.3\linewidth]{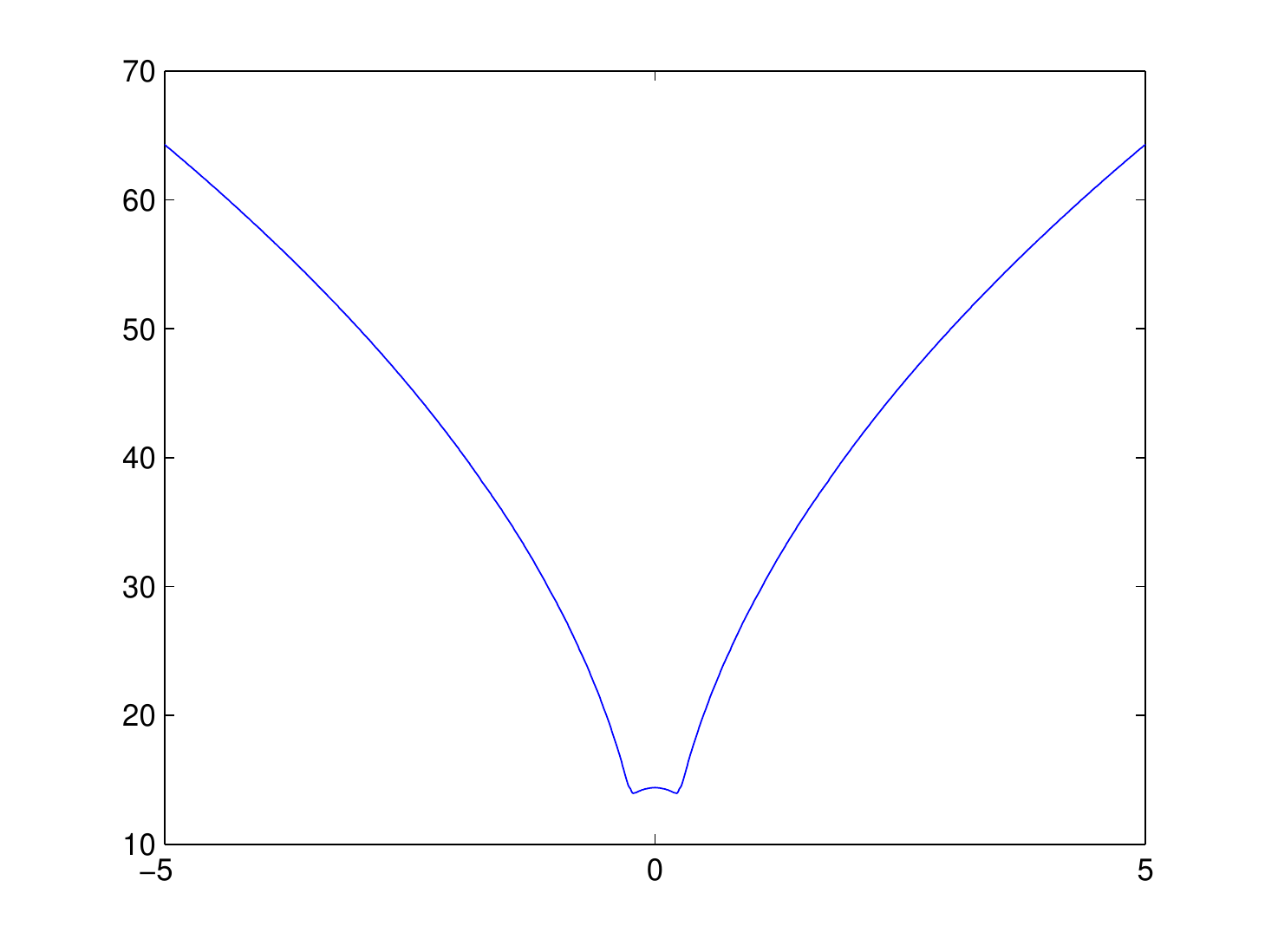}}\\
  \subfigure[Derivative of the DLD  for $\lambda=1.1$ and $i=30$]{\includegraphics[width=0.3\linewidth]{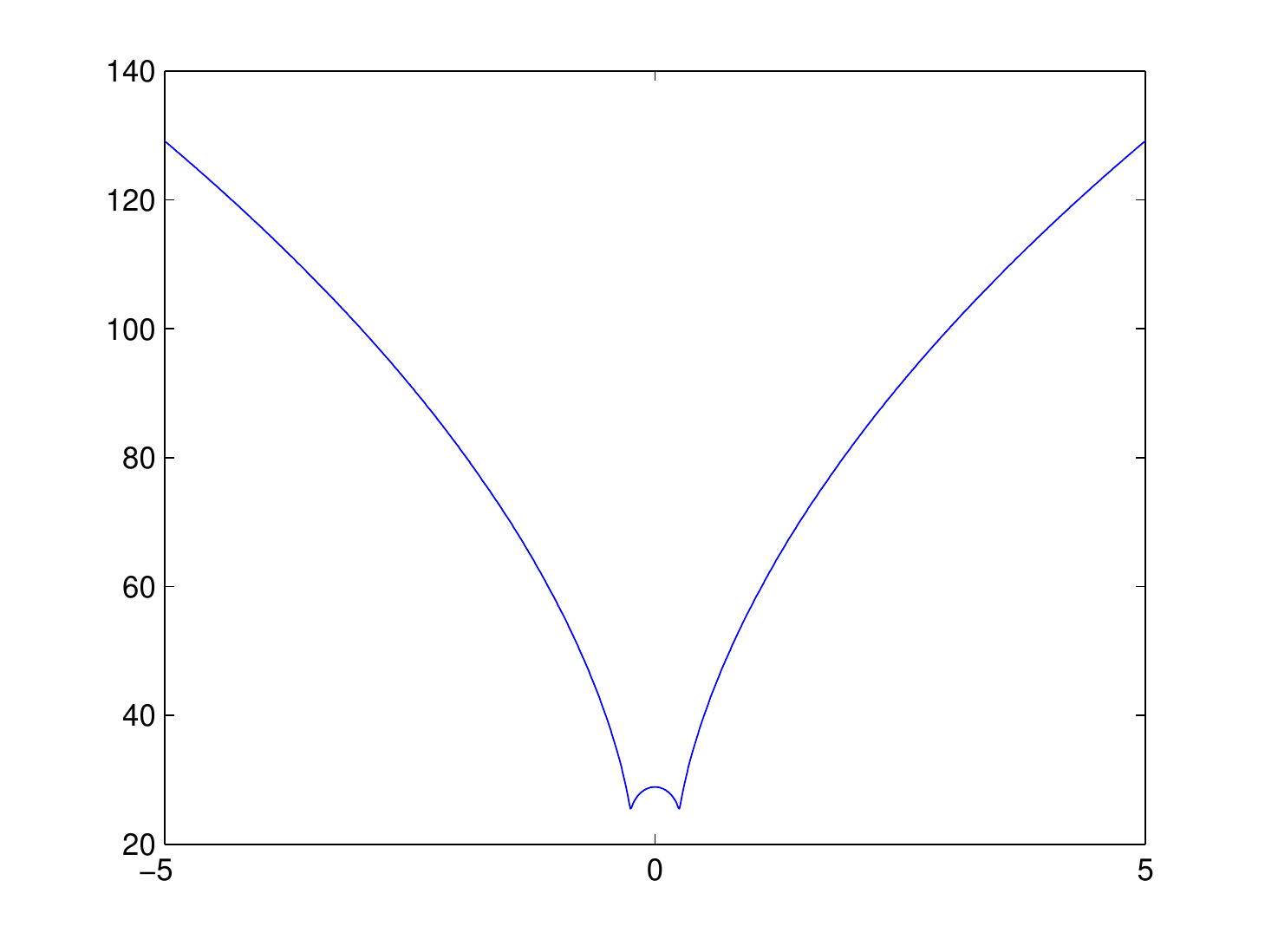}}
  \subfigure[Derivative of the DLD  for $\lambda=1.1$ and $i=100$]{\includegraphics[width=0.3\linewidth]{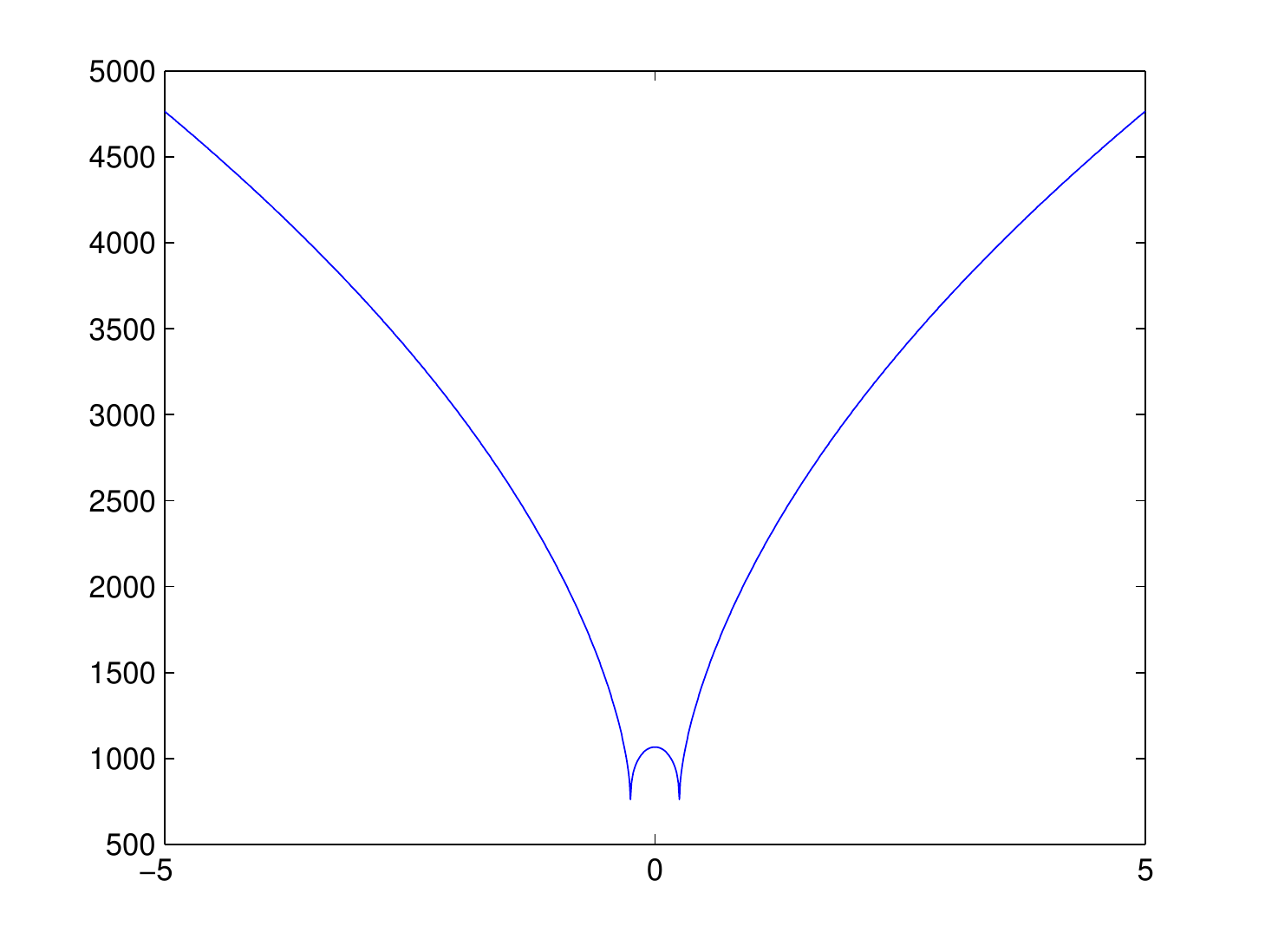}}\\
  \caption{Derivative of the DLD  along the line $y=0.25$ for different values of $\lambda$ and iterations $i$.}
  \label{derivative_of_DLD}
\end{figure*}

\subsection{Example 2: A Hyperbolic Saddle Point for Nonlinear, Area-Preserving Autonomous Maps}
\label{sec:examp2}

We will analyze this case using a theorem of \cite{moser56}. Moser's theorem applies to analytic, area
preserving maps in a neighborhood of a hyperbolic fixed point. We will discuss how the assumptions of analyticity and
area preservation can be removed  later on, but for now we proceed with these assumptions.

We consider an analytic, area-preserving map in a neighborhood of $x=y=0$ of the form:

\begin{equation}\label{nonlinear_equation}
\left \{ \begin{array}{ccccc}
   x_{n+1} & = & f(x_n,y_n) & = & \lambda  x_n + \cdots\\
   y_{n+1} & = & g(x_n,y_n) & = & \lambda^{-1}  y_n + \cdots\\
\end{array}\right .
\end{equation}

\noindent
where $\lambda>1$ and $''\cdots''$ represent nonlinear terms that obey the area-preserving constraint. Moser's Theorem
states that there exists a real analytic, area preserving change of variables of the following form:

\begin{equation}\label{change_variables}
\begin{array}{ccccc}
   x & = & x(\xi,\eta), \\
   y & = & y(\xi,\eta), \\
\end{array}
\end{equation}

\noindent
with inverse

\begin{equation}\label{change_variables_inverse}
\begin{array}{ccccc}
   \xi & = & \xi(x,y), \\
   \eta & = & \eta(x, y), \\
\end{array}
\end{equation}

\noindent
such that in these new coordinates
\eqref{nonlinear_equation} has the following {\em normal form}:

\begin{equation}\label{normal_form}
\left \{ \begin{array}{ccl}
   \xi_{n+1}  & = & U(\xi_n\eta_n)\xi_n\\
   \eta_{n+1} & = & U^{-1}(\xi_n\eta_n)\eta_n\\
\end{array}\right .
\end{equation}

\noindent
where $U(\xi\eta)$ is a power series in the product $\xi \eta$ of the form $U_0 + U_2\xi\eta + \cdots $, with $U_0 =
\lambda$, which converges  in a neighborhood of the hyperbolic point. Note that it follows from the form of
\eqref{normal_form} that $U(\cdot)$ is constant on orbits of \eqref{normal_form}, i.e. $U(\xi_{i+1}
\eta_{i+1}) = U(\xi_i \eta_i) = U, \forall i$.

The form of \eqref{normal_form} implies that the same computation described in Section \ref{sec:examp1} applies.
Therefore for $MD^{+}_p$ we have:

\begin{eqnarray}
  MD^{+}_p & = & \displaystyle{\sum ^{N-1}_{i=0}  |\xi_{i+1}-\xi_i|^p + |\eta_{i+1}-\eta_i|^p} \nonumber\\
	    &   & 						     \nonumber\\
            & = & \displaystyle{\sum ^{N-1}_{i=0}  |\xi_i|^p|U(\xi_i\eta_i)-1|^p + |\eta_i|^p|U^{-1}(\xi_i \eta_i)-1|^p
= \sum ^{N-1}_{i=0}  |\xi_i|^p|U-1|^p + |\eta_i|^p|U^{-1}-1|^p} \nonumber\\
            &   &						    \nonumber \\
            & = & \displaystyle{|\xi_0|^p|U-1|^p \left (1+|U|^p+...+|U|^{(N-1)p}\right ) + |\eta_0|^p|U^{-1}-1|^p \left
(1+|U^{-1}|^p+...+|U^{-1}|^{(N-1)p}\right )} \nonumber\\
            &   &						   \nonumber  \\
            & = & \displaystyle{|\xi_0|^p|U-1|^p\left | \frac{U^{Np}-1}{U^p-1} \right | +
|\eta_0|^p|U^{-1}-1|^p\left | \frac{1/U^{Np}-1}{1/U^p-1} \right |}. \nonumber
\end{eqnarray}
\normalsize

\noindent
$MD^{-}_p$ is computed analogously, and therefore $MD_p = MD^{+}_p + MD^{-}_p$
is given by:

$$ MD_p  = \displaystyle{(|\xi_0|^p + |\eta_0|^p) \left (|U-1|^p\left
| \frac{U^{Np}-1}{U^p-1} \right | +|U^{-1}-1|^p\left
| \frac{1/U^{Np}-1}{1/U^p-1} \right | \right )},$$
\normalsize
\noindent
In this expression $U$ is constant along trajectories, {\em i.e.},  $U(\xi_0\eta_0) = U(\xi_i\eta_i) = U, \forall
i$. But in general, different initial conditions $( \xi_0, \eta_0)$ do not belong to the same trajectory, thus $U$ 
depends on $( \xi_0, \eta_0)$.
More succinctly we express this as:
\begin{equation}
\begin{array}{ccl}
   MD_p & = & \displaystyle{(|\xi_0|^p + |\eta_0|^p)f(U(\xi_0,\eta_0),p,N)}\\
\end{array}
\label{eq:DLD_nonlin_aut}
\end{equation}

\noindent
This expression has the same form as \eqref{DLD_lin_aut}, except for the dependence of the function $f$ on 
$U(\xi_0,\eta_0)$. We note that  $U$ is analytical and thus it is a smooth function. Therefore Theorem 
\ref{thm:lin_aut_map} still applies because  the first derivative is infinite
due  to the  first factor  in  expression (\ref{eq:DLD_nonlin_aut}). We can conclude that the derivative of $MD_p$ 
transverse to the stable manifold is singular on the manifold and the derivative of $MD_p$ transverse to the unstable 
manifold is singular on the manifold. However, this is a statement that is true in the $\xi-\eta$ normal form 
coordinates. In practice we will compute the Lagrangian descriptor in the original $x-y$ coordinates and therefore we 
would like to conclude that the ``singular sets''  of the Lagrangian descriptor in the $x-y$ coordinates correspond to 
the stable and unstable manifolds of the hyperbolic fixed point. We will now show that this is the case. We will carry 
out the argument for the the stable manifold. The argument for the unstable manifold is completely analogous.

First, using \eqref{change_variables}, in the $x-y$ coordinates the stable manifold of the origin is given by the curve
$(x(0, \eta), y(0, \eta))$. Here $\eta$ is viewed as a parameter for this parametric representation of the stable
manifold in the original $x-y$ coordinates. A vector perpendicular to this curve at any point on the
curve is given by $\left( -\frac{dy}{d \eta} (0, \eta), \frac{dx}{d \eta} (0, \eta) \right)$. Now we compute the rate
of change of $MD_p=MD_p (x, y)$ in this direction and consider its behavior on the stable manifold of the origin.This
is given by the directional derivative of $MD_p (x, y)$  in this direction evaluated on the stable manifold:

\begin{equation}
\left( \frac{\partial MD_p}{\partial x}(x(0, \eta), y(0, \eta)), \frac{\partial MD_p}{\partial y} (x(0, \eta), y(0,
\eta))\right) \cdot
\left( -\frac{dy}{d \eta} (0, \eta), \frac{dx}{d \eta} (0, \eta) \right),
\label{eq:direc_deriv}
\end{equation}

\noindent
where the derivatives are evaluated on $(x(0, \eta), y(0, \eta))$, but we will omit this explicitly for the sake of a
less cumbersome notation. Next we will use the chain rule to express partial derivatives with respect to $x$ and $y$ in
terms of $\xi$ and $\eta$ as follows:

\begin{eqnarray}
\frac{\partial MD_p}{\partial x} & = & \frac{\partial MD_p}{ \partial \xi}  \frac{\partial \xi}{\partial x} +
\frac{\partial MD_p}{ \partial \eta}  \frac{\partial \eta}{\partial x}, \nonumber \\
\frac{\partial MD_p}{\partial y} & = & \frac{\partial MD_p}{ \partial \xi}  \frac{\partial \xi}{\partial y} +
\frac{\partial MD_p}{ \partial \eta}  \frac{\partial \eta}{\partial y}.
\label{eq:cr}
\end{eqnarray}

\noindent
Substituting \eqref{eq:cr} into \eqref{eq:direc_deriv} gives:

\begin{eqnarray}
-\left( \frac{\partial MD_p}{ \partial \xi}  \frac{\partial \xi}{\partial x} + \frac{\partial MD_p}{ \partial \eta}
\frac{\partial \eta}{\partial x}\right) \frac{dy}{d \eta}  +
  \left( \frac{\partial MD_p}{ \partial \xi}  \frac{\partial \xi}{\partial y} + \frac{\partial MD_p}{ \partial \eta}
\frac{\partial \eta}{\partial y}\right) \frac{dx}{d \eta}.
\end{eqnarray}

Now it follows from the argument given in Theorem \ref{thm:lin_aut_map} that
$\frac{\partial MD_p}{ \partial \xi} $ is not differentiable on the stable manifold ($\xi =0$ for $p<1$).  Hence
\eqref{eq:DLD_nonlin_aut} is  not differentiable in a direction transverse to the stable manifold at a point on the
stable manifold in the $x-y$ coordinates.

\subsection{Example 3: A Hyperbolic Saddle Point for Linear, Area-Preserving Nonautonomous  Maps}
\label{sec:examp3}

In this section we will consider the nonautonomous analog of example 1 in Section \ref{sec:examp1}. Namely, we will
consider a linear, area preserving nonautonomous map having a hyperbolic trajectory at the origin. The map that we
consider has the following form:

$$\left \{ \begin{array}{ccc}
   x_{n+1} & = & \lambda_n x_n\\
   y_{n+1} & = & \frac{1}{\lambda_n} y_n \\
\end{array}\right .$$

\noindent
where $\lambda_n >1, \, \forall n$.   Note that $x =y =0$ is a hyperbolic trajectory with stable manifold given by
$x=0$ and unstable manifold given by $y=0$ {\em for all $n$}.

We will only compute $MD^+_p$ since the computation of $MD^-_p$ is analogous.   Hence, for $MD^+_p$  we have:

\begin{eqnarray}
   MD^{+}_p & = & \displaystyle{\sum ^{N-1}_{i=0}  |x_{i+1}-x_i|^p + |y_{i+1}-y_i|^p =
\sum^{N-1}_{i=0} |x_i|^p|\lambda_i-1|^p + |y_i|^p|1/\lambda_i-1|^p} \nonumber \\
            &   &						    \nonumber \\
            & = & \displaystyle{|x_0|^p\left (|\lambda_0-1|^p+|\lambda_0|^p|\lambda_1-1|^p +...+ |\lambda_0 \cdots
\lambda_{N-2}|^p|\lambda_{N-1}-1|^p\right ) + }\nonumber\\
            &   &                     \nonumber  \\
            &   & \displaystyle{|y_0|^p\left (|1/\lambda_0-1|^p+|1/\lambda_0|^p|1/\lambda_1-1|^p +...+ |1/\lambda_0
\cdots 1/\lambda_{N-2}|^p|1/\lambda_{N-1}-1|^p\right )} \nonumber\\
            &   &   				    \nonumber \\
            & = & \displaystyle{|x_0|^p \left ( |\lambda_0-1|^p + \sum^{N-1}_{i=1} \left
(\prod^{i-1}_{j=0}|\lambda_j|^p \right )|\lambda_i-1|^p\right ) +}  \nonumber \\
 & & \displaystyle{|y_0|^p \left ( |1/\lambda_0-1|^p + \sum^{N-1}_{i=1}  \left (\prod^{i-1}_{j=0}|1/\lambda_j|^p \right )|1/\lambda_i-1|^p\right ) } \nonumber
\end{eqnarray}
\normalsize
\noindent
A similar calculation gives:

\begin{eqnarray}
 MD^{-}_p & = & \displaystyle{|x_0|^p \left ( |1-1/\lambda_{-1}|^p + \sum^{-N}_{i=-2} \left
(\prod^{i+1}_{j=-1}|1/\lambda_j|^p \right )|1-1/\lambda_i|^p\right )} +\nonumber\\
& & \displaystyle{|y_0|^p \left ( |1-\lambda_{-1}|^p +
\sum^{-N}_{i=-2} \left (\prod^{i+1}_{j=-1}|\lambda_j|^p \right )|1-\lambda_i|^p\right )} .\nonumber
  \end{eqnarray}
\normalsize

\noindent
Combining these two expressions gives:

\begin{equation}
\begin{array}{ccl}
MD_p & = & \displaystyle{|x_0|^p f(\Lambda,p,N) + |y_0|^p g(\Lambda^{*},p,N)}\\
\end{array}
\label{DLD_lin_nonaut}
\end{equation}

\noindent
where
$$\Lambda = (\lambda_0,\lambda_1,...,\lambda_{N-1},1/\lambda_{-1},1/\lambda_{-2},...,1/\lambda_{-N})$$
and
$$\Lambda^{*} = (1/\lambda_0,1/\lambda_1,...,1/\lambda_{N-1},\lambda_{-1},\lambda_{-2},...,\lambda_{-N}).$$

Now \eqref{DLD_lin_nonaut} has the same functional form as \eqref{DLD_lin_aut}. So for $p<1$ the same argument as given
in Theorem \ref{thm:lin_aut_map} holds. Therefore, along a line transverse to the  stable manifold (i.e. $x=0$) $MD_p$
is not differentiable at the point on this line that intersects the stable manifold.  The analogous statement holds for
the unstable manifold.

\subsection{Example 4: A Hyperbolic Saddle Point for a Nonlinear, Area Preserving Nonautonomous Map}
\label{sec:examp4}

We now consider a two dimensional nonlinear area-preserving nonautonomous map having the following form:

\begin{eqnarray}
   x_{n+1} & = & \lambda_n x_n + f_n(x_n,y_n), \nonumber \\
   y_{n+1} & = & \lambda_n^{-1} y_n + g_n(x_n,y_n),  \quad (x_n, y_n) \in \mathbb{R}^2, \forall n,
 \label{nonlinear_nonautonomous}
\end{eqnarray}

\noindent
where $\lambda_n >1, \, \forall n$ with $f_n (0, 0) = g_n (0, 0)=0, \, \forall n$. We assume that $f_n (\cdot, \cdot)$
and $g_n (\cdot, \cdot)$ are  real valued nonlinear functions (i.e. of order quadratic or higher), they are at least
$C^1$, and they satisfy the constraints that the nonlinear map defined by \eqref{nonlinear_nonautonomous} is area
preserving.

Since the origin is a hyperbolic trajectory it follows that it has (one dimensional) stable and unstable manifolds
(\cite{irwin,deblasi,KH}). We will apply the method of discrete
Lagrangian descriptors to \eqref{nonlinear_nonautonomous} and show that the stable and unstable manifolds of the origin
correspond to the ``singular features'' of $MD_p$ ($p<1$), in the sense described in Theorem \ref{thm:lin_aut_map}.
Our method of proof will be similar in spirit to how we showed  the result for nonlinear autonomous maps by using
Moser's theorem. Unfortunately, there is
no analog of Moser's theorem for nonlinear, nonautonomous area preserving two dimensional maps. Nevertheless, we will
still use a ``change of variables'', or ``conjugation'' result that is a nonautonomous  map version of the
Hartman-Grobman theorem due to \cite{bv06}.

The  classical Hartman-Grobman (\cite{hart60a,hart60b,hart63,grob59,grob62}) theorem applies to autonomous maps in a
neighborhood of a hyperbolic fixed point. The result states that there exists a homeomorphism, defined in a
neighborhood of the fixed point, which conjugates the map to its linear part. Stated another way, the homeomorphism
provides a new set of coordinates where the map is given by its linear part in the new coordinates. There are two issues
that we must immediately face in order for this approach to work as it did for the linear and nonlinear autonomous maps.
One is the generalization of the Hartman-Grobman theorem to  the setting on nonautonomous maps (this is dealt with  in
\cite{bv06}) and the other is the smoothness of the conjugation (``change of coordinates'') since a derivative is
required in the application of the chain rule (see \eqref{eq:cr}).

In general, the conjugacy provided by the Hartman-Grobman theorem is not differentiable (see \cite{meyer86} for
examples). However, there has been much work in determining conditions under which the conjugacy is at least $C^1$,
see, e.g., \cite{svs90,ghr03}. Moreover, Hartman has proven (\cite{hart60b}) that in two dimensions, a $C^2$
diffeomorphism having a  hyperbolic saddle can be linearized with a $C^1$ conjugacy (see also \cite{s86}). We also point
out that differentiability is a property defined pointwise, and the nondifferentiability of the conjugacy typically
fails to hold at the fixed point (see the examples in \cite{meyer86}) and we are not interested in  differentiability at
the fixed point, but at points along the stable and unstable manifolds of the fixed point. The conjugacy is
differentiable at these points, as is described in the lecture notes of Rauch entitled ``Conjugacy Outline'' availiable
at
\url{http://www.math.lsa.umich.edu/~rauch/courses.html}. This result also follows from the rectification theorem for
ordinary differential equations  (\cite{arnold73}) which says that,  away from points where the vector field vanishes,
the vector field is conjugate to ``rectilinear flow'', and this conjugacy is as smooth as the vector field. Note that
this result is valid for both autonomous and nonautonomous vector fields.

So setting aside the smoothness issues, we will give a brief discussion of the set-up of \cite{bv06} for the
nonautonomous Hartman-Grobman theorem.  They consider
that the  phase space is given by a Banach space, denoted $X$ (for us $X$ is $\mathbb{R}^2$). The dynamics is described
by a sequence of maps on $X$:

\begin{equation}
F_n (v) = A_n v + f_n (v),  \quad v \in X, \, n \in \mathbb{Z}.
\label{eq:genNAmap}
\end{equation}

\noindent
Precise assumptions on $A_n$ and $f_n (v)$ are given in \cite{bv06}).  In particular $A_n$ is a hyperbolic operator,
which for us is:

\begin{equation}
A_n = \left(
\begin{array}{cc}
\lambda_n & 0 \\
0 & \lambda_n^{-1}
\end{array}
\right)
\end{equation}

\noindent
and  where $f_n (v)$ is ``small'', in some sense, e.g. $f_n (0)=0$ with $f_n (v)$ satisfying a Lipschitz condition. Our
$f_n (v)$ will be at least $C^1$ and satisfy  the condition for the map  \eqref{nonlinear_nonautonomous} to be area
preserving.

For each $n \in \mathbb{Z}$  construct a homeomorphism, $h_n (\cdot)$ that conjugates
\eqref{eq:genNAmap} to its linear part, i.e.,

\begin{equation}
A_n \circ h_n = h_{n+1} \circ F_n,
\end{equation}

\noindent
or, expressing this in a diagram for the full dynamics (following \cite{bv06}) we have:

\begin{equation}
\begin{array}{clclclclc}
 & & F_{n-1} & & F_n & & F_{n+1} & & \\
 \longrightarrow & X & \longrightarrow & X & \longrightarrow  & X & \longrightarrow  & X & \longrightarrow\\
  & \downarrow h_{n-1} &  &\downarrow h_{n}& &\downarrow h_{n+1}  &  & \downarrow h_{n+2} &\\
  & &   A_{n-1} & &  A_n & & A_{n+1} & &  \\
  \longrightarrow & X & \longrightarrow & X & \longrightarrow  & X & \longrightarrow  & X & \longrightarrow
\end{array}
\end{equation}

In Section \ref{sec:examp3} we proved that the discrete Lagrangian descriptor for the  linear, area preserving
nonautonomous map is singular along the stable and unstable manifolds of the hyperbolic trajectory at the origin, i.e.
$x=0$ and $y=0$, respectively. Note that the discrete Lagrangian descriptor is only a function of the initial
condition, $(x_0, y_0)$. Hence we can use the change of coordinates $h_0 (\cdot)$ and the argument given in Section
\ref{sec:examp2} to conclude that the discrete Lagrangian descriptor for the nonlinear nonautonomous area preserving
map \eqref{nonlinear_nonautonomous}  is singular along the stable and unstable manifolds.

\section{Application to the Chaotic Saddle of the  H\'enon Map}
\label{sec:henon}

We  now  illustrate the method of discrete Lagrangian descriptors for autonomous, area preserving nonlinear maps by
applying it to the H\'enon map (\cite{henon76}):

\begin{equation}
H(x,y) = (A+By-x^2,x).
\label{eq:henonmap}
\end{equation}

\noindent
The map is area preserving for $|B|=1$ and is orientation-preserving if $B<0$. Moreover, it
follows from work in \cite{dn79} that for values of $A$ larger than

\begin{equation}
A_{2} = (5 + 2\sqrt{5})(1 + |B|)^2/4,
\label{eq:ACM}
\end{equation}

\noindent
the H\'enon map has a hyperbolic invariant Cantor set which is topologically conjugate to a  Bernoulli shift on two 
symbols, i.e. it has a {\em chaotic saddle}. We will use the method of discrete Lagrangian descriptors to visualize this 
chaotic saddle.

We  consider $B=-1$, which after substituting this value into \eqref{eq:ACM}, gives $A_{2} = 5 + 2\sqrt{5} \approx 
9.47$, and therefore we choose $A=9.5$, which satisfies the chaos condition. With these choices of parameters we have 
$H(x,y) = (9.5-y-x^2,x)$.  Applying the method of discrete Lagrangian descriptors to this map gives the structures shown 
in  Figure \ref{dibujo_chaotic_saddle_Henon}, where the chaotic saddle is the set that appears as dark blue. This 
method, in contrast to other techniques for computing chaotic saddles (see for instance \cite{yorke}), has the advantage 
that it simultaneously provides insight into the manifold structure associated with the chaotic saddle.

  \begin{figure}[ht]
    \centering
    \includegraphics[scale=0.65]{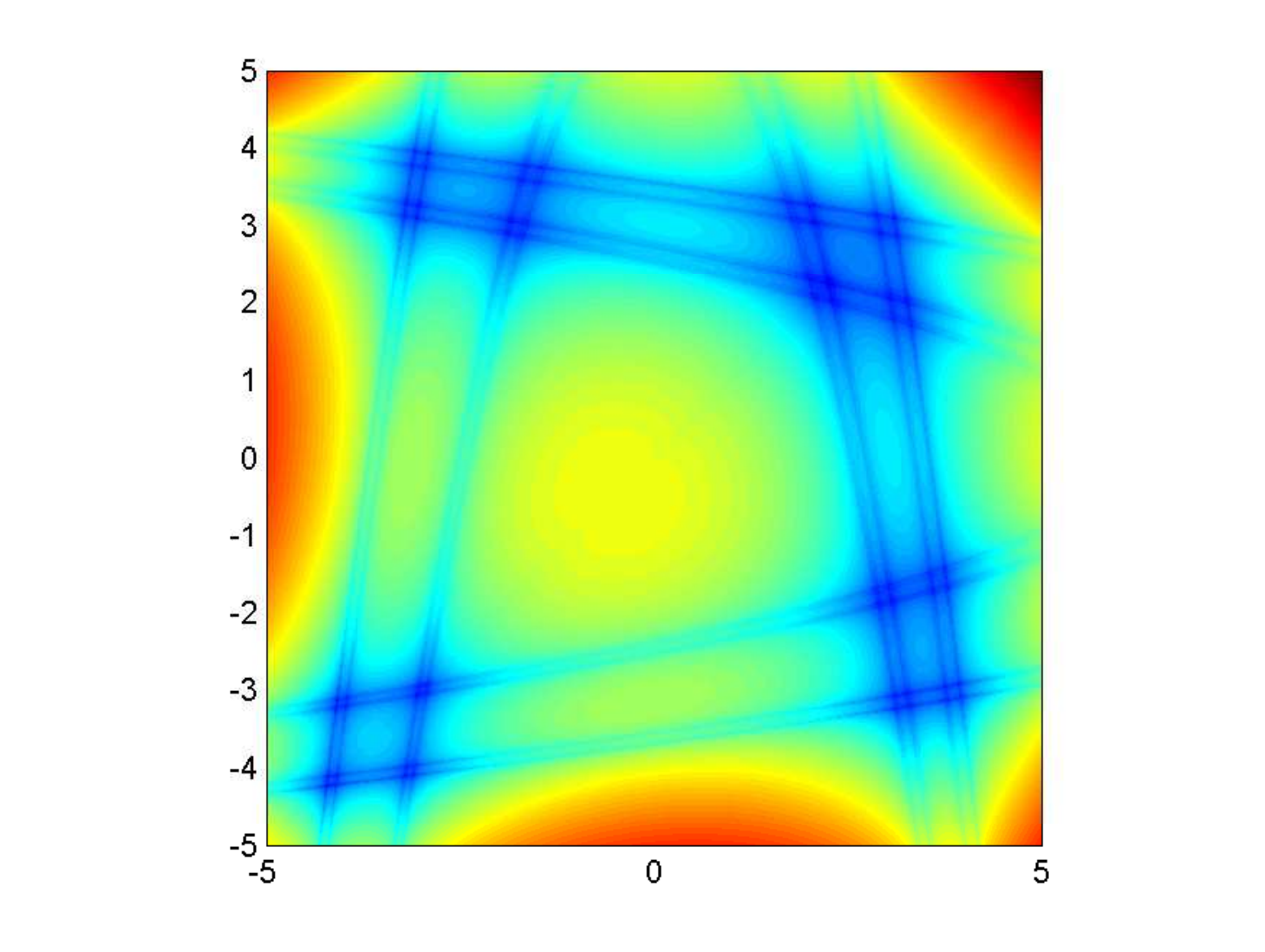}
    \caption{Computation of the chaotic saddle of the H\'enon map for $A =9.5, \, B = -1$, after $N=5$ iterations and 
$p=0.05$.}
    \label{dibujo_chaotic_saddle_Henon}
  \end{figure}

\section{Application to the Chaotic Saddle of a Nonautonomous  H\'enon Map}
\label{sec:NAhenon}

We  now  illustrate the method of discrete Lagrangian descriptors for nonautonomous, area preserving maps by
applying it to a nonautonomous version of the  H\'enon map. In particular, in \eqref{eq:henonmap} we take;

\begin{equation}
B= -1, \quad A = 9.5 +\epsilon \, \cos (n).
\end{equation}

\noindent
For $\epsilon$ `small'', this is a nonautonomous perturbation of the situation considered in Section \ref{sec:henon}, 
so that  we would expect to have a structure similar to that shown in Figure \ref{dibujo_chaotic_saddle_Henon}, but  
slightly varying with $n$, i.e. a nonautonomous chaotic saddle (see \cite{wiggins99}).

The discrete Lagrangian descriptor method provides us with a numerical tool to explore this question.
Figure \ref{fig:NAhenon}  illustrates the phase space structure at different times for the nonautonomous H\'enon map.
 Clearly the output is similar to that shown in Fig. \ref{dibujo_chaotic_saddle_Henon}, but varying with respect to $n$.

\begin{figure*}[htbp!]
\centering
\subfigure[ $n=-3$]
{\label{fig:good_array}\includegraphics[width=0.45\linewidth]{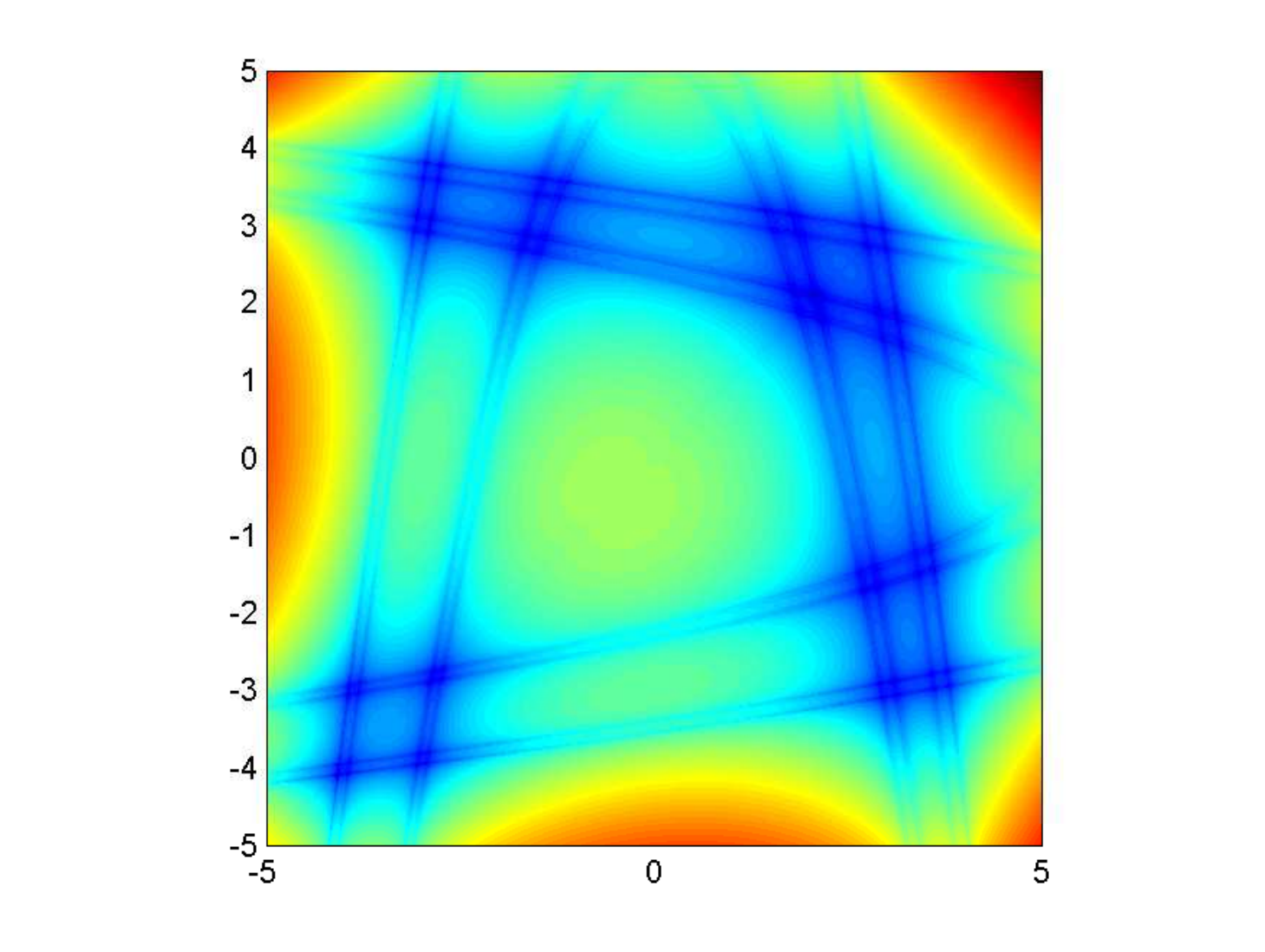}} 
\subfigure[ $n=-1$]
{\label{fig:bad2_array}\includegraphics[width=0.45\linewidth]{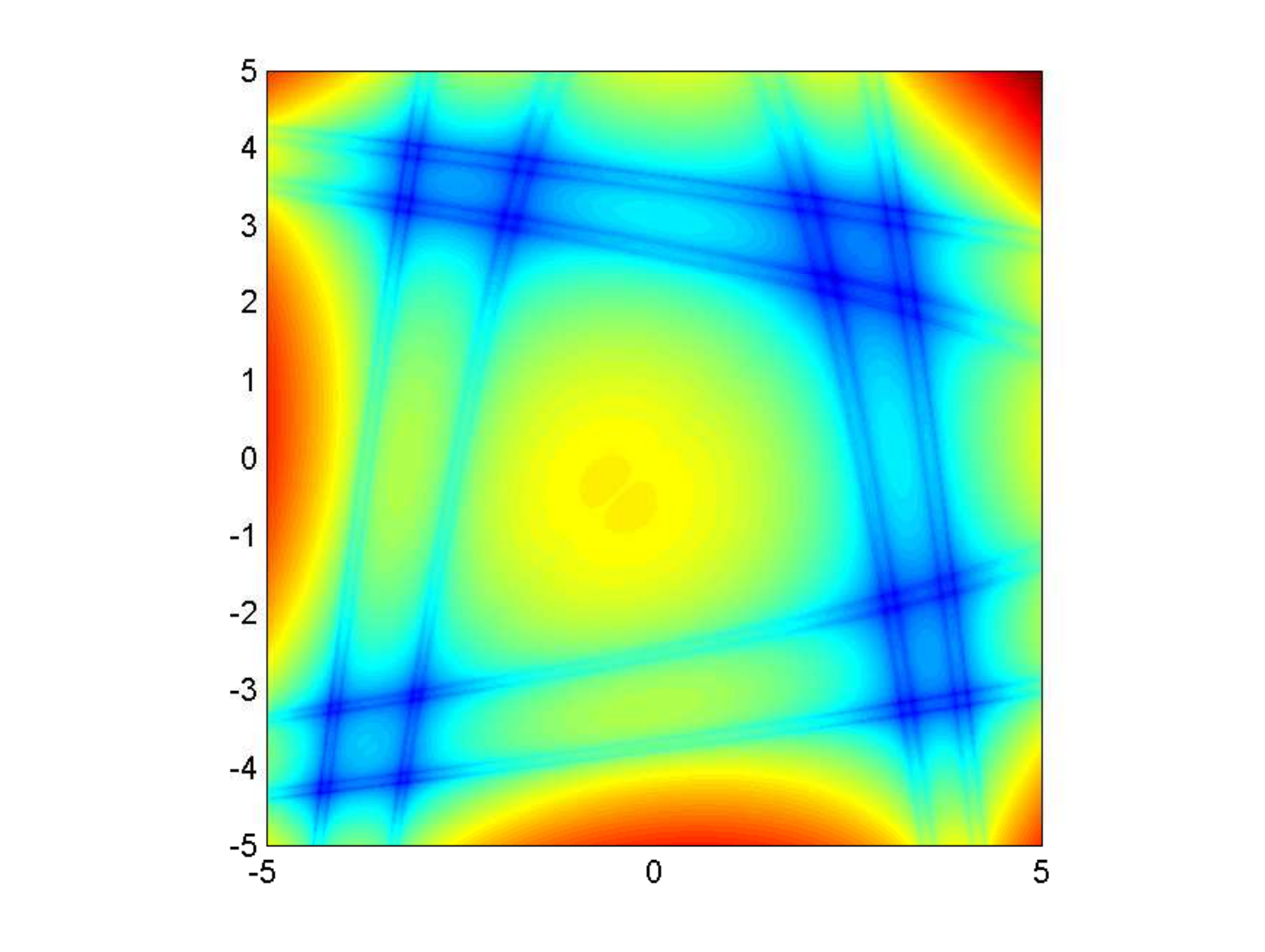}} \\
\subfigure[ $n=1$]
{\label{fig:good_array}\includegraphics[width=0.45\linewidth]{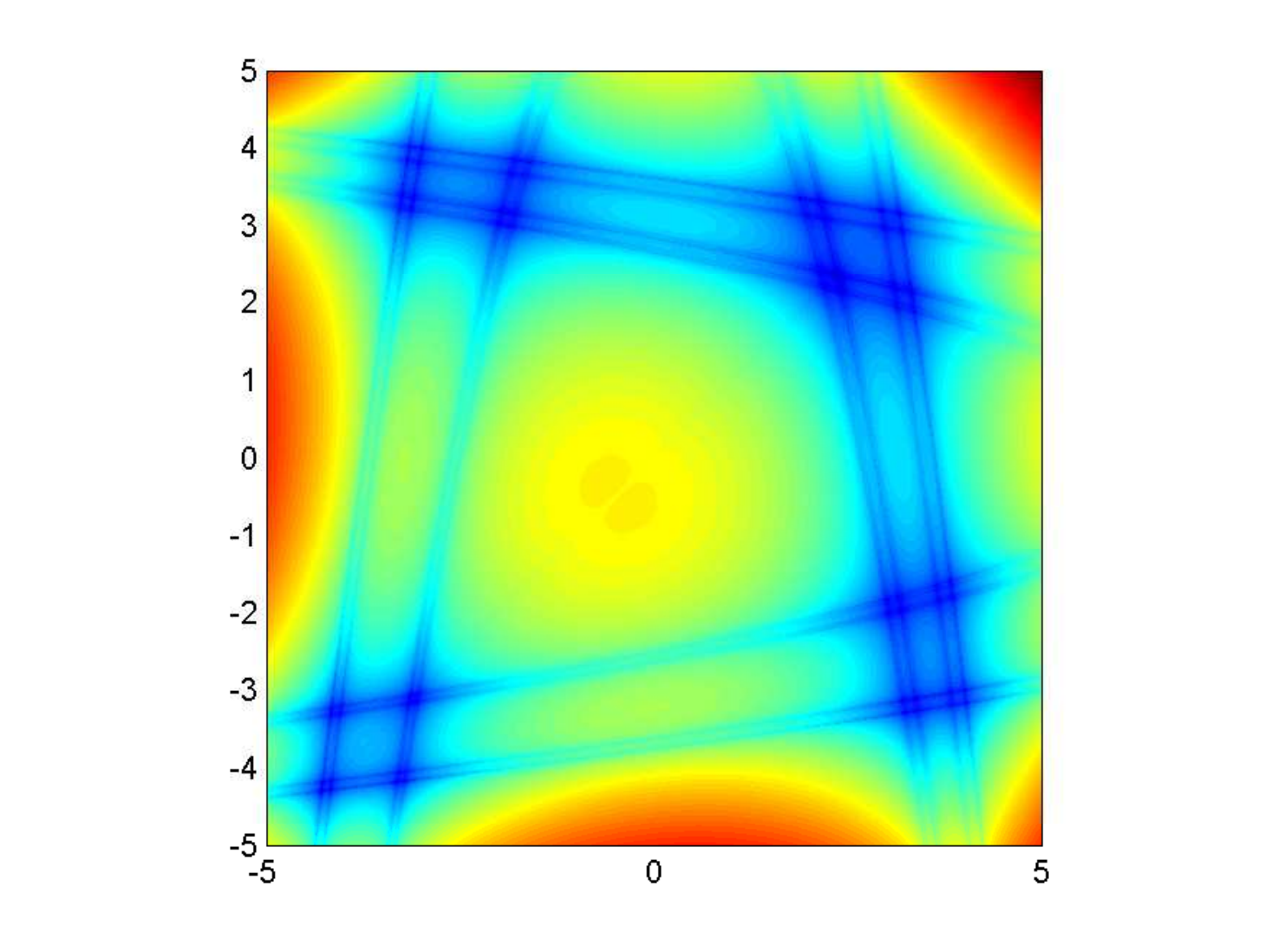}}
\subfigure[ $n=3$]
{\label{fig:bad2_array}\includegraphics[width=0.45\linewidth]{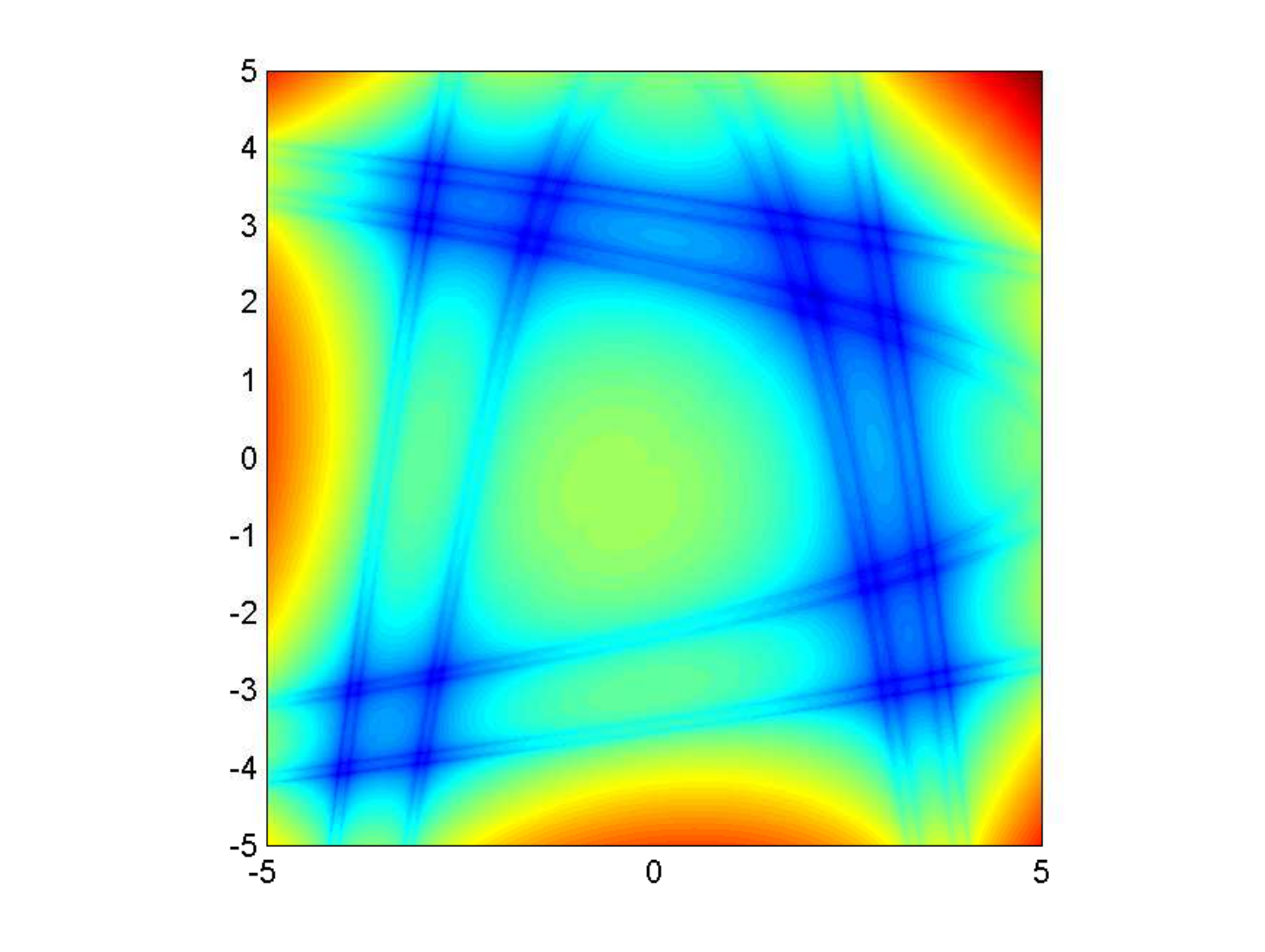}}\qquad\hfill\caption{Computation of the 
chaotic saddle of the nonautonomous H\'enon map for $A =9.5 +\epsilon \, \cos (n), \, B = -1$, after  $N=5$ iterations 
and $p=0.05$. The output is shown for four different times.}
\label{fig:NAhenon}
\end{figure*}

\section{Summary and Conclusions}
\label{sec:summ}

In this paper we have generalized the notion of Lagrangian descriptors to  autonomous and nonautonomous maps. We have 
restricted our discussion to two dimensional, area preserving maps, but with additional work it should be possible to 
remove these restrictions.

In the discrete time setting explicit expressions for the Lagrangian descriptors were derived, and for the $\ell^p$ 
norm, $p <1$, we  proved a theorem that gave rigorous meaning to the statement that ``singular sets'' of the Lagrangian 
descriptors  correspond to the stable and unstable manifolds of hyperbolic invariant sets.

\section*{\bf Acknowledgments.}  The research of AMM is supported by the MINECO under grant MTM2011-26696. The research 
of SW is supported by  ONR Grant No.~N00014-01-1-0769.  We acknowledge support from MINECO: ICMAT Severo Ochoa project 
SEV-2011-0087.


\begin{thebibliography}{}


\bibitem[Arnold(1973)Arnold]{arnold73}
Arnold, V.~I. (1973).
\newblock {\em Ordinary Differential Equations\/}.
\newblock MIT Press.

\bibitem[Barreira and Valls(2006)Barreira and Valls]{bv06}
Barreira, L. and Valls, C. (2006).
\newblock A {G}robman--{H}artman theorem for nonuniformly hyperbolic dynamics.
\newblock {\em J. Diff. Eq.}, {\bf 228}, 285--310.

\bibitem[de~Blasi and Schinas(1973)de~Blasi and Schinas]{deblasi}
de~Blasi, F.~S. and Schinas, J. (1973).
\newblock On the stable manifold theorem for discrete time dependent processes
  in banach spaces.
\newblock {\em Bull. London Math. Soc.}, {\bf 5}, 275--282.

\bibitem[de~la C\'amara {\em et~al.}(2012)de~la C\'amara, Mancho, Ide, Serrano,
  and Mechoso]{amism11}
de~la C\'amara, A., Mancho, A.~M., Ide, K., Serrano, E., and Mechoso, C.
  (2012).
\newblock Routes of transport across the {A}ntarctic polar vortex in the
  southern spring.
\newblock {\em J. Atmos. Sci.}, {\bf 69}(2), 753--767.

\bibitem[de~la C\'amara {\em et~al.}(2013)de~la C\'amara, Mechoso, Mancho,
  Serrano, and Ide]{ammsi13}
de~la C\'amara, A., Mechoso, C., Mancho, A.~M., Serrano, E., and Ide, K.
  (2013).
\newblock Quasi-horizontal transport within the antarctic polar night vortex:
  Rossby wave breaking evidence and lagrangian structures.
\newblock {\em J. Atmos. Sci.}, {\bf 70}, 2982--3001.

\bibitem[Devaney and Nitecki(1979)Devaney and Nitecki]{dn79}
Devaney, R. and Nitecki, Z. (1979).
\newblock Shift automorphisms in the h\'enon mapping.
\newblock {\em Comm. Math. Phys.}, {\bf 67}, 137--179.

\bibitem[Grobman(1959)Grobman]{grob59}
Grobman, D.~M. (1959).
\newblock Homeomorphisms of systems of differential equations.
\newblock {\em Doklady Akad. Nauk SSSR\/}, {\bf 128}, 880--881.

\bibitem[Grobman(1962)Grobman]{grob62}
Grobman, D.~M. (1962).
\newblock Topological classification of neighborhoods of a singularity in
  n-space.
\newblock {\em Mat. Sbornik\/}, {\bf 56}(98), 77--94.

\bibitem[Guysinsky {\em et~al.}(2003)Guysinsky, Hasselblatt, and
  Rayskin]{ghr03}
Guysinsky, M., Hasselblatt, B., and Rayskin, V. (2003).
\newblock Differentiability of the {H}artman-{G}robman linearization.
\newblock {\em Discrete and Continuous Dynamical Systems\/}, {\bf 9}(4),
  979--984.

\bibitem[Hartman(1960a)Hartman]{hart60b}
Hartman, P. (1960a).
\newblock A lemma in the theory of structural stability of differential
  equations.
\newblock {\em Proc. Amer. Math. Soc.}, {\bf 11}, 610--620.

\bibitem[Hartman(1960b)Hartman]{hart60a}
Hartman, P. (1960b).
\newblock On local homeomorphisms of {E}uclidean spaces.
\newblock {\em Bolet\'in de la Sociedad Matem\'atica Mexicana\/}, {\bf 5},
  220--241.

\bibitem[Hartman(1963)Hartman]{hart63}
Hartman, P. (1963).
\newblock On the local linearization of differential equations.
\newblock {\em Proc. Amer. Math. Soc.}, {\bf 14}, 568--573.

\bibitem[H\'enon(1976)H\'enon]{henon76}
H\'enon, M. (1976).
\newblock A two-dimensional mapping with a strange attractor.
\newblock {\em Comm. Math. Phys.}, {\bf 50}, 69--77.

\bibitem[Irwin(1973)Irwin]{irwin}
Irwin, M.~C. (1973).
\newblock Hyperbolic time dependent processes.
\newblock {\em Bull. London Math. Soc.}, {\bf 5}, 209--217.

\bibitem[Katok and Hasselblatt(1995)Katok and Hasselblatt]{KH}
Katok, A. and Hasselblatt, B. (1995).
\newblock {\em Introduction to the Modern Theory of Dynamical Systems\/}.
\newblock Cambridge University Press, Cambridge.

\bibitem[Madrid and Mancho(2009)Madrid and Mancho]{chaos}
Madrid, J. A.~J. and Mancho, A.~M. (2009).
\newblock Distinguished trajectories in time dependent vector fields.
\newblock {\em Chaos\/}, {\bf 19}, 013111.

\bibitem[Mancho {\em et~al.}(2013) ]{mwcm13}
Mancho, A.M.,  Wiggins, S.,  Curbelo, J., and
  Mendoza, C. (2013).
\newblock Lagrangian descriptors: {A} method for revealing phase space
  structures of general time dependent dynamical systems.
\newblock {\em Communications in Nonlinear Science and Numerical Simulation\/},
  {\bf 18}(12), 3530 -- 3557.

\bibitem[Mendoza and Mancho(2010)Mendoza and Mancho]{prl}
Mendoza, C. and Mancho, A.~M. (2010).
\newblock The hidden geometry of ocean flows.
\newblock {\em Phys. Rev. Lett.}, {\bf 105}(3), 038501.

\bibitem[Mendoza and Mancho(2012)Mendoza and Mancho]{jfm}
Mendoza, C. and Mancho, A.~M. (2012).
\newblock The {L}agrangian description of ocean flows: a case study of the
  {K}uroshio current.
\newblock {\em Nonlin. Proc. Geophys.}, {\bf 19}(4), 449--472.

\bibitem[Mendoza {\em et~al.}(2010)Mendoza, Mancho, and Rio]{nlpg2}
Mendoza, C., Mancho, A.~M., and Rio, M.-H. (2010).
\newblock The turnstile mechanism across the {K}uroshio current: analysis of
  dynamics in altimeter velocity fields.
\newblock {\em Nonlin. Proc. Geophys.}, {\bf 17}(2), 103--111.

\bibitem[Mendoza {\em et~al.}(2014)Mendoza, Mancho, and S.Wiggins]{mmw14}
Mendoza, C., Mancho, A.~M., and S.Wiggins (2014).
\newblock Lagrangian descriptors and the assessment of the predictive capacity
  of oceanic data sets.
\newblock {\em Nonlin. Proc. Geophys.}, {\bf 21}, 677--689.

\bibitem[Meyer(1986)Meyer]{meyer86}
Meyer, K.~R. (1986).
\newblock Counter-examples in dynamical systems via normal form theory.
\newblock {\em SIAM Review\/}, {\bf 28}(1), 41--51.

\bibitem[Moser(1956)Moser]{moser56}
Moser, J. (1956).
\newblock The analytic invariants of an area-preserving mapping near a
  hyperbolic fixed point.
\newblock {\em Comm. Pure Appl. Math.}, {\bf 9}, 673--692.

\bibitem[Nusse and Yorke(1989)Nusse and Yorke]{yorke}
Nusse, H. and Yorke, J.~A. (1989).
\newblock A procedure for finding numerical trajectories on chaotic saddles.
\newblock {\em Physica D}, {\bf 36}, 137--156.

\bibitem[Rempel {\em et~al.}(2013)Rempel, Chian, Branderburg, Munuz and Shadden]{rempel}
Rempel, E. L., Chian, A. C.-L., Brandenburg, A., Munuz, P. R. and Shadden, S. C. (2013)
\newblock Coherent structures and the saturation of a nonlinear dynamo, 
\newblock {\em J. Fluid Mech.}, {\bf 729}, 309-329.

\bibitem[Stowe(1986)Stowe]{s86}
Stowe, D. (1986).
\newblock Linearization in two dimensions.
\newblock {\em J. Diff. Eq.}, {\bf 63}, 183--226.

\bibitem[S.Wiggins(1999)S.Wiggins]{wiggins99}
S.Wiggins (1999).
\newblock Chaos in the dynamics generated by sequences of maps, with
  applications to chaotic advection in flows with aperiodic time dependence.
\newblock {\em Z. angew. Math. Phys. (ZAMP)\/}, {\bf 50}, 585--616.

\bibitem[van Strien(1990)van Strien]{svs90}
van Strien, S. (1990).
\newblock Smooth linearization of hyperbolic fixed points without resonance.
\newblock {\em J. Diff. Eq.}, {\bf 85}(1), 66--90.

\end{thebibliography}
\end{document}